\renewcommand*\env@matrix[1][*\c@MaxMatrixCols c]{%
  \hskip -\arraycolsep
  \let\@ifnextchar\new@ifnextchar
  \array{#1}}
\def\ord{\operatorname{ord}}
\def\Sym{\operatorname{Sym}}
\def\Homeo{\operatorname{Homeo}}
\newtheorem{thm}{Theorem}[section]
\newtheorem{prop}[thm]{Proposition}
\newtheorem{corollary}[thm]{Corollary}
\newtheorem{lemma}[thm]{Lemma}
\theoremstyle{definition}
\newtheorem{remark}[thm]{Remark}
\newtheorem{defi}[thm]{Definition}
\begin{document}
\title[The conjugacy problem for Thompson-like groups]{The conjugacy problem for \\ Thompson-like groups}	
\author{Julio Aroca}
\address{Instituto de Ciencias Matemáticas, C/ Nicolás Cabrera, 13-15, E-28049 Madrid, Spain}
\email{julio.aroca@icmat.es}
\date{\today}
\begin{abstract}
In this paper we generalize techniques of Belk-Matucci \cite{BM} to solve the conjugacy problem for every Thompson-like group $V_n(H)$, where $n \geq 2$ and $H$ is a subgroup of the symmetric group on $n$ elements. We use this to prove that, if $n \neq m$, $V_n(H)$ is not isomorphic to $V_m(G)$ for any $H,G$.
\end{abstract}
\maketitle

\section{Introduction}

In \cite{HI}, Higman constructed a family of finitely-presented infinite simple groups $V_n$, of which $V_2$ is the celebrated group $V$ previously defined by Thompson in the 1960's. Later, these groups were extended in \cite{FH, NE} to the family of Thompson-like groups $V_n(H)$, where $H$ is any subgroup of the symmetric group on $n$ elements, $\Sym(n)$. Using this notation, $V_n(Id) = V_n$ of Higman. We remark that the groups $V_n(Id)$ and $V_n(\mathbb{Z}_2)$ also appear in the study of groups of homeomorphisms of surfaces of infinite topological type \cite{AF2, FN}.

The groups $V_n(H)$ may be informally described as follows; see Section \ref{S2} for a detailed  definition. Denote by $T_n$ the infinite $n$-regular rooted tree. Observe that the space of ends of $T_n$, namely the space of infinite sequences on the alphabet $\{1, \ldots, n\}$, is homeomorphic to the $n$-adic Cantor set $C_n$. Let $\mathcal{T}$ and $\mathcal{T}'$ be rooted subtrees of $T_n$ with the same number $k$ of leaves, both rooted at the root of $T_n$. Let $\tau \in \Sym(k)$ be a bijection between the sets of leaves of $\mathcal{T}$ and $\mathcal{T}'$, respectively. Finally, let $\sigma = (\sigma_1, \dots, \sigma_{k})$ be a tuple of elements $\sigma_i \in H$. Then $(\mathcal{T},\mathcal{T}',\tau,\sigma)$ induces a homeomorphism of $C_n$ by taking the $i$-th leaf of $\mathcal{T}$ to the $\tau(i)$-th leaf in $\mathcal{T}'$ acting on every word below according to $\sigma_{\tau(i)}$. Of course, there are many tuples that induce the same map of $C_n$, and $V_n(H)$ is defined as a group of equivalence classes of these tuples. A well-known result of Brouwer implies that $C_n$ is homeomorphic to the standard Cantor set $C$, so we may regard $V_n(H)$ as a subgroup of $\Homeo(C)$ for all $n\geq 2, H < \Sym(n)$. 

In general, it is difficult to decide whether two arbitrary groups $V_n(G)$ and $V_m(H)$ are isomorphic to each other. Some results have been obtained in this direction \cite{BDJ,FH}, but a complete classification remains unknown. A useful approach to this problem is a result of Higman \cite{HI}, which studies the conjugacy classes (in $V_n(Id)$) of homomorphisms of finite cyclic groups into $V_n(Id)$. This allows him to prove that $V_n(Id) \not\simeq V_m(Id)$ if and only if $n \neq m$. We will be able to reproduce a similar result by studying the conjugacy classes of finite-order elements of some Thompson-like groups. To do that, we base our work in the theory of \textit{strand diagrams} \cite{BM}, a family of directed graphs which are used to solve the conjugacy problem for $V$ and its subgroups $F$ and $T$. We generalize strand diagrams to \textit{$n$-strand diagrams}, a bigger family of graphs which can represent elements of any Thompson-like group, thus the main result of this paper solves the conjugacy problem for every $V_n(H)$. In general terms, two elements of $V_n(H)$ are conjugate if and only if they are represented by the same $n$-strand diagram up to \textit{conjugating transformations}, see Section 3.

\begin{thm}
\label{MT}
Let $n\geq 2$, and $H  \leq \Sym(n)$. Let $f$ and $g$ be two elements of $V_n(H)$. Then $f$ and $g$ are conjugate if and only if their corresponding reduced closed $n$-strand diagrams are equal or they differ by a finite number of conjugating transformations. 
\end{thm}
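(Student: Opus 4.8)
The plan is to reduce the conjugacy problem to a combinatorial equivalence of closed $n$-strand diagrams, following the strategy of Belk--Matucci \cite{BM} but tracking carefully the $H$-labels that distinguish $V_n(H)$ from $V_n(Id)$. I will use the dictionary established in the previous sections: every element of $V_n(H)$ is represented by a reduced $n$-strand diagram, composition corresponds to stacking diagrams and reducing, and the identity is the trivial diagram of parallel strands. Given $f \in V_n(H)$ with diagram $D_f$, I form its closed diagram $\widehat{D}_f$ by gluing the outputs of $D_f$ to its inputs in order, producing a directed graph on the annulus whose strands carry $H$-labels. Since the theorem asserts that the reduced closure is a complete conjugacy invariant up to the conjugating transformations, the proof splits into the two implications.

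For the forward direction, suppose $g = h^{-1} f h$. The essential point is that the closure operation is invariant under cyclic permutation of the factors of a product, so the closed diagram of $h^{-1} f h$ coincides, after sliding the bottom factor around the annulus, with that of $f h h^{-1}$. Stacking the diagrams for $h$, $f$, $h^{-1}$ and closing, I slide the copy of $D_{h^{-1}}$ around the annulus until it becomes adjacent to $D_h$, whereupon $D_h D_{h^{-1}}$ reduces to the trivial diagram and leaves exactly $\widehat{D}_f$. Each sliding step is a conjugating transformation and each cancellation is a reduction, so the reduced closures of $g$ and $f$ agree up to finitely many conjugating transformations. The delicate point is that the $H$-labels must be transported correctly as the diagram is slid; this is precisely where the conjugating transformations act nontrivially on labels, and not merely on the underlying graph.

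For the reverse direction, I would show that each conjugating transformation lifts to an honest conjugation in $V_n(H)$. Cutting the annular diagram $\widehat{D}_f$ along a chosen arc recovers an element in the conjugacy class of $f$; choosing a different cut arc, or redistributing the $H$-labels along a directed cycle of the diagram, changes this element only by conjugation, and these two operations are exactly the conjugating transformations. Enumerating the transformations and exhibiting, for each, an explicit conjugator completes this implication, giving $f \sim g$ whenever their reduced closures differ by finitely many such moves.

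The main obstacle, and the principal new feature relative to the label-free case treated by Higman, is proving that the reduced closure is well defined and that the conjugating transformations capture the label ambiguity \emph{exactly}. Concretely, I must establish that the reduction system on closed $n$-strand diagrams is terminating and locally confluent, so that Newman's lemma yields a unique reduced underlying graph; the residual freedom then lives entirely in the $H$-labels along the directed cycles of that graph. On each such cycle the labels may be cyclically permuted and the total holonomy, namely the product of labels around the cycle, is defined only up to conjugacy in $H$. I expect the hardest step to be showing that two labelings of the same reduced graph represent conjugate elements if and only if they are related by the conjugating transformations, equivalently that conjugacy of the associated cyclic data reduces to conjugacy of holonomies in $H$. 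This is exactly the ingredient that is invisible when $H = Id$ and that forces the statement to involve conjugating transformations at all.
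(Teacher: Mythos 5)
Your overall strategy is the paper's own (close the diagram on an annulus, forward implication via cyclic invariance of the closure, reverse implication via cutting, Newman's lemma for uniqueness of reduced forms, which the paper establishes in Proposition \ref{prop:equiv}), and your forward direction is essentially sound. But the reverse direction as you outline it has a genuine gap: the assertion that ``cutting the annular diagram along a chosen arc recovers an element in the conjugacy class of $f$'' is precisely the hard content of the theorem, which the paper splits into Propositions \ref{AP1} and \ref{AP2}. First, you must pass from the closure $\overline{\Gamma}$ to the \emph{reduced} closed diagram without leaving the conjugacy class, and a reduction on $\overline{\Gamma}$ need not be a reduction of $\Gamma$: its support can cross, or wrap entirely around, the gluing locus, so that \emph{no} choice of cut arc avoids it --- for instance a type I reduction consuming all $p$ strands of the closure, or a type IV reduction on a free loop carrying a single $\sigma$-vertex. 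The paper handles this in Proposition \ref{AP1} by exhibiting an explicit conjugator $\Gamma''$ in each of eight cases (I.a--IV.c); your proposal promises explicit conjugators only for conjugating transformations and for redistributions of labels, and contains no substitute for this case analysis of wrap-around reductions.

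Second, even granting the reductions, ``choosing a different cut arc'' only makes sense for two cuts of the same annular diagram; in the groupoid formulation one must compare a $(p,p,n)$-strand diagram $\Gamma$ and a $(q,q,n)$-strand diagram $\Gamma'$ with $p\neq q$ whose closures are equal, and then no single arc realizes both cuttings. The paper's Proposition \ref{AP2} resolves this with the infinite cyclic cover $\Gamma^{\infty}$ of $\overline{\Gamma}$ and a deck-transformation argument that manufactures $\Gamma''$ satisfying $\Gamma\Gamma'' = \Gamma''\Gamma'$; nothing in your outline plays this role, and it is not a formality. Relatedly, you misplace the difficulty: the label/holonomy step you single out as hardest is dispatched quickly in the paper --- cyclic rotation of the factorization of the holonomy around a free loop is exactly a conjugating transformation (Remark \ref{rem:conjugatingtransformation}), and the forward direction needs only that the closures of $(\Gamma'')^{-1}\Gamma'\Gamma''$ and $\Gamma'$ differ by the order in which type III reductions are performed. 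The substantive new work is the graph-theoretic lifting described above, which your proposal assumes rather than proves.
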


Once this is done, we prove the following non-isomorphism theorem, using the ideas described previously: 

\begin{thm}
\label{thm:nonisomorphism}
Let $n,m \geq 2$ two different numbers. Then $V_n(P)$ and $V_m(Q)$ are not isomorphic for any $P \leq \Sym(n)$ and $Q \leq \Sym(m)$.
\end{thm}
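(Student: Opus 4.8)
The plan is to exhibit an isomorphism invariant, computable from the conjugacy classes of finite-order elements, that recovers $n$ exactly; the theorem then follows because any isomorphism $\phi\colon V_n(P)\to V_m(Q)$ preserves orders of elements, conjugacy, and centralizers, and hence preserves this invariant. First I would specialize Theorem \ref{MT} to finite-order (periodic) elements: a torsion element $g$ is represented by a reduced closed $n$-strand diagram which, being periodic, is a disjoint union of \emph{annular} components, each recording a single cyclic orbit of cylinders of $C_n$ on which $\langle g\rangle$ acts. The decisive numerical feature is that any $n$-ary partition of $C_n$ into cylinders (the leaves of a subtree of $T_n$) has a number of pieces congruent to $1$ modulo $n-1$, since every expansion replaces one leaf by $n$ and thus changes the count by $n-1$. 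The quantity to extract from the torsion data is precisely this modulus $n-1$.

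Concretely, I would single out the torsion elements $g$ that are \emph{fixed-point-free} and \emph{transitive}, meaning that the partition of $C_n$ they permute consists of a single cyclic orbit filling all of $C_n$; equivalently, the reduced closed diagram is one annular component with no fixed part. Such a $g$ cyclically permutes $k=\ord(g)$ cylinders that together tile $C_n$, so by the previous paragraph $k\equiv 1\pmod{n-1}$; conversely every $k$ of this form is realized. Hence the set of orders of transitive fixed-point-free torsion elements of $V_n(H)$ is exactly $\{\,1+j(n-1): j\ge 1\,\}=\{n,\,2n-1,\,3n-2,\dots\}$, whose minimum is $n$ and whose consecutive gaps all equal $n-1$. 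Either datum recovers $n$.

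To make this usable across an abstract isomorphism, I must phrase \emph{fixed-point-free} and \emph{transitive} group-theoretically. Transitivity I would read as indecomposability: $g$ admits no factorization $g=ab$ into commuting nontrivial torsion elements with disjoint supports, a condition detectable through centralizers. For fixed-point-freeness I would compute $C(g)$ from the annular diagram via Theorem \ref{MT}: for a transitive free rotation of order $k$ one finds $C(g)\cong V_n(H)\times\langle g\rangle$ with $\langle g\rangle\cong\mathbb{Z}/k$ central, whereas a rotation fixing a nontrivial clopen region carries an extra factor acting on that region, changing the isomorphism type of $C(g)$. Thus ``transitive fixed-point-free torsion of order $k$'' becomes a property invariant under any isomorphism, and $\phi$ must match these classes in $V_n(P)$ and $V_m(Q)$ order for order. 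Comparing the minimal such orders forces $n=m$, proving the theorem.

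The hard part will be the last step: turning the dynamical notions into honest group-theoretic ones and carrying out the centralizer computations from $n$-strand diagrams. In particular I expect the delicate points to be (i) separating a genuinely free transitive rotation from one with a small fixed region using only the abstract isomorphism type of $C(g)$, since both centralizers contain large Thompson-like factors, and (ii) handling general $H$, for which $V_n(H)$ need not be simple, so any characterization phrased via a ``simple direct factor'' of a centralizer must be replaced by one robust to the choice of $H$. Establishing that the realizable transitive free orders are exactly $\{\,1+j(n-1): j\ge 1\,\}$, and that this set is intrinsic, is what ultimately pins down $n$.
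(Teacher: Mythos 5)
Your engine is the right one --- the observation that a complete set of cylinders tiling $C_n$ has cardinality $\equiv 1 \bmod (n-1)$ is exactly the arithmetic behind Higman's congruence (Lemma \ref{congruence}), which the paper also uses --- but the invariant you build on it has a genuine gap, and it is precisely your worry (ii). Your claim that the orders of transitive fixed-point-free torsion elements of $V_n(H)$ form exactly $\{1+j(n-1): j\ge 1\}$, with minimum $n$, is false for nontrivial $H$: your count ``$k=\ord(g)$ cylinders'' silently assumes the action below the cylinders is trivial, which labelled diagrams need not satisfy. Concretely, take $n=4$ and $H=\langle \sigma\rangle$ with $\sigma=(12)(34)$; the letterwise map $g(w)=\sigma(w)$ lies in $V_4(H)$, has order $2$, is fixed-point-free (no letter is fixed by $\sigma$), and its reduced closed diagram is a single free loop with one $\sigma$-vertex --- a single annular component with no fixed part, hence ``transitive'' in your sense. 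So the minimum order is $2$, not $4$, and the gap structure of the order set is contaminated by $\ord(\sigma)$ for the various $\sigma$ realizable as loop products. The paper's repair is to restrict to elements of prime order $p$ with $p\nmid\ord(P)$: Proposition \ref{redclos} shows all $\sigma$-vertices of the reduced closed diagram can then be eliminated, reducing everything to $V_n(Id)$, where Higman's count applies.

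The second gap is your centralizer computation, which your group-theoretic detection of ``transitive fixed-point-free'' rests on: for a free transitive rotation $g$ of order $k$, $C(g)$ is \emph{not} $V_n(H)\times\langle g\rangle$. Besides a diagonal Thompson-like subgroup, $C(g)$ contains every element that acts on each $g$-invariant clopen set by a locally constant power of $g$; these form an infinite abelian group (continuous maps from the quotient Cantor set to $\mathbb{Z}/k$), so $C(g)$ is an extension of a Thompson-like group by an infinite abelian kernel, and the isomorphism-type comparison you propose between the free and non-free cases has a false base case and no proof. The paper sidesteps centralizers entirely: it counts conjugacy classes of elements of one large prime order $p$ (chosen to divide none of $\ord(P)$, $\ord(Q)$, $n-1$, $m-1$), a quantity manifestly preserved by any abstract isomorphism, and shows via Proposition \ref{redclos} together with the congruence $n_1+pn_2\equiv^* 1 \bmod n-1$ that this count equals $n$ (Lemma \ref{lem:orderpelements}); comparing $n$ with $m$ finishes the proof. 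If you want to salvage your route, replace ``minimal order of a transitive free rotation'' by this conjugacy-class count at a suitable prime --- that is essentially the paper's argument, and it needs neither an intrinsic characterization of special torsion elements nor any centralizer structure.
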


The plan of the paper is as follows. In Section 2 we will define the Thompson-like groups. Section 3 is devoted to the definition of $n$-strand diagrams and the transformations which can be performed on them. Finally, in Sections 4 and 5 we prove Theorems \ref{MT} and \ref{thm:nonisomorphism}, respectively.


\section{Self-similar bijections and Thompson-like groups}\label{S2}

In this section we will define the Thompson-like groups \cite{FH, NE}. A well known subfamily of these groups are Higman-Thompson's groups, widely covered in \cite{CF, HI}. We first need some definitions.

Let $C_n$ be the $n$-adic Cantor set, which is constructed inductively as follows: $C_n^1$ corresponds to first subdividing $C_n^0= [0,1]$ into $n+1$ intervals of equal length, numbered $1, \ldots, n$ from left to right, and then taking collection of odd-numbered subintervals. Next, $C_n^2$ is obtained from $C_n^1$ by applying the same procedure to each of the intervals forming $C_n^1$, and so on. Finally, $C_n$ is the intersection of all $C_n^i$. Observe that $C_2$ is the usual ``middle-third" Cantor set. 

\subsection{Trees} 
Let $\mathcal{T}_n$ be the regular $n$-ary rooted tree, that is, the infinite simplicial rooted tree where the root has exactly $n$ neighbours, and every vertex other than the root has exactly $n+1$ neighbours. Then, $\mathcal{T}_n$ becomes a metric space by deeming every edge of $\mathcal{T}_n$ to have length 1. For $i\ge 0$, define the $i$-th generation $\mathcal{T}^i_n$ of $\mathcal{T}_n$ to be the set of vertices of $\mathcal{T}_n$ at distance exactly $i$ from the root. It is straightforward to verify that this identification induces a homeomorphism between $C_n$ and the set of ends of $\mathcal{T}_n$. Besides, trees allow us to give a useful alternate incarnation of $C_n$. For $i\ge 1$, label the leaves of $C_n^i$ with the set $\{1,\dots, n\}^{i}$, in such way that labels increase lexicographically from left to right. 
Then, every geodesic ray in $\mathcal{T}_n$ issued from the root is uniquely encoded by an infinite word in $\{1, \dots, n\}$. In this way, we deduce that 
$C_n$ is also homeomorphic to the space $\{1, \dots, n\}^\mathbb{N}$ equipped with the product topology. 

\subsection{Branches} We now introduce some useful subsets of the Cantor space $C_n$ which we will need to define Thompson-like groups. 

\begin{defi}[Concatenation]
Let $u,v$ be words in $\{1, \ldots, n\}$, where $u$ is finite. The \textit{concatenation} $uv$ is the word obtained by joining $v$ after $u$.
\end{defi}

\begin{defi}[Branch]
Let $u$ be a finite word in $\{1, \ldots, n\}$. The {\em branch} of $C_{n}$  determined by $u$ is the subset of $C_{n}$ defined as
\begin{equation*}\label{identification2}
B_{u}=\{ uw \in C_{n} \mid w\in \{1, \ldots, n\}^{\mathbb{N}} \},
\end{equation*} 
where  $uw$ denotes the concatenation of $u$ and $w$. The word $u$ is called the {\em prefix} of the branch $B_u$. 
A {\em  branch} of $C_{n}$ is a subset of the form $B_{u}$, for some finite word $u \in \{1, \ldots, n\}$.
\end{defi}

Note that there is a natural identification of $B_{u}$ with the set of geodesic rays in $\mathcal{T}_n$ issued from $u$, and also with the geodesic in $\mathcal{T}_n$ from the root to $u$, see Figure \ref{fig:branch}.  
In particular, $B_{u}$ is again homeomorphic to the space of ends of the $n$-ary rooted tree, where this time the root is labelled $u$. As such, every branch of $C_{n}$ is itself homeomorphic to $C_n$. 

\begin{figure}[h]
\labellist
\pinlabel $1$ at 24 84
\pinlabel $3$ at 30 63
\pinlabel $u=13$ at 50 47
\pinlabel $B_u$ at 60 25
\endlabellist
\centering
\includegraphics[width=0.25\textwidth]{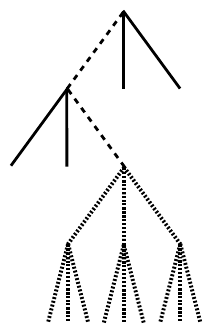}
\caption{An example of branch $B_u \subset C_3$ determined by $u = 13$.}
\label{fig:branch}
\end{figure}

Let $u$, $u'$ be two finite words in $\{1, \ldots, n\}$. We write $u' \subset u$ to mean that there exists a nonempty finite word $x \in \{1, \ldots, n\}$ such that $u' = ux$. In turn, we say $B_{u'} \subset B_{u}$ if $u' \subset u$.

\begin{defi}[Independence/completeness]
We say that the branches $B_{u}$ and $B_{u'}$ are {\em independent} if $u\not \subset u'$ and $u' \not \subset u$. We say that a set of branches of $C_{n}$ is {\em complete} if their union is equal to $C_{n}$. 
\end{defi}

Suppose now that $\mathcal  B= \{B_{u_i}\}$ is a finite, complete set of pairwise independent branches of $C_{n}$. Observe that each $u_i$ determines a unique geodesic on $\mathcal{T}_n$, with the root as one of its endpoints. Further, the branches being pairwise independent is equivalent to the fact that $u_i$ does not lie on the geodesic determined by $u_j$, with $i\ne j$. In this way, $\mathcal B$ uniquely determines a finite rooted tree $\mathcal{T}_\mathcal{B} \subset \mathcal{T}_{n}$, whose root is the root of $\mathcal{T}_{n}$, and whose leaves are naturally labelled by the $u_i$'s. Finally, observe that since the set $\mathcal{B}$ is complete, every vertex of $\mathcal{T}_\mathcal{B}$ which is neither the root nor a leaf  has exactly $n+1$ neighbours. 

\subsection{Expansions}  
Let $B_u$ be a branch of $C_n$, where $u$ is a finite word in $\{1, \ldots, n\}$. The {\em elementary expansion} of $B_u$ is the set of branches \linebreak $\{B_{u1}, B_{u2},\ldots, B_{un}\}$. 
Intuitively, the elementary expansion of $B_u$ corresponds to the result of adding the $n$ descendants of $u$ in $\mathcal{T}_n$ to the geodesic from the root of $\mathcal{T}_n$ to $u$. Conversely, an {\em elementary reduction} of $\{B_{u1}, B_{u2},\ldots, B_{un}\}$ is $B_u$.   

\begin{defi}[Elementary expansion/reduction]
\label{def:elementary}
Let $\mathcal{B}$  be a complete set of
pairwise independent branches of $C_{n}$.
An {\em elementary expansion} (resp. {\em elementary reduction}) of $\mathcal{B}$ is a set $\mathcal{B}'$ of branches which is obtained from $\mathcal{B}$ by elementary expansions (resp. elementary reductions) on some of the branches of $\mathcal{B}$.
\end{defi}

Observe that, in definition \ref{def:elementary}, the set $\mathcal{B}'$ is also a complete set of pairwise independent branches of $C_{n}$. 

\begin{defi}[Expansion/reduction]
Let $\mathcal B$ and $\mathcal B'$ be two complete sets of pairwise independent branches of $C_{n}$. We say that $\mathcal{B}'$ is an {\em expansion} (resp. {\em reduction}) of $\mathcal B$ if $\mathcal B '$ may be obtained from $\mathcal B$ by a finite sequence of elementary expansions (resp. elementary reductions). A complete set of pairwise independent branches is \textit{reduced} if no more elementary reductions can be performed on it.
\end{defi}

In light of the equivalence between sets of branches and finite rooted trees, in what follows we will also speak about the {\em elementary expansion/reduction of a tree} and a {\em reduced tree}. Observe that, given two complete sets of pairwise independent branches $\mathcal B$ and $\mathcal B '$, their intersection $\mathcal{B} \cap \mathcal B'$ is an expansion of both, which is again a complete set of pairwise independent branches. In particular, we have proved the following fact, which will be crucial in what follows: 

\begin{lemma}
Any two complete sets of
pairwise independent branches of $C_{n}$ have a common expansion. 
\label{lem:commonexpansion}
\end{lemma}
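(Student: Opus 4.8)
The plan is to exploit the correspondence, recorded immediately before the statement, between complete sets of pairwise independent branches and finite rooted subtrees of $\mathcal{T}_n$: a set $\mathcal{B}$ determines a tree $\mathcal{T}_{\mathcal{B}}$, an expansion of $\mathcal{B}$ corresponds to growing $\mathcal{T}_{\mathcal{B}}$ downward, and a common expansion of two sets $\mathcal{B}, \mathcal{B}'$ corresponds to a finite subtree into which both $\mathcal{T}_{\mathcal{B}}$ and $\mathcal{T}_{\mathcal{B}'}$ can grow. I would produce such a common expansion explicitly as the set $\mathcal{B} \cap \mathcal{B}'$ of nonempty intersections $B_{u_i} \cap B_{v_j}$, where $\mathcal{B} = \{B_{u_i}\}$ and $\mathcal{B}' = \{B_{v_j}\}$, and then verify that the corresponding tree is exactly $\mathcal{T}_{\mathcal{B}} \cup \mathcal{T}_{\mathcal{B}'}$.

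First I would analyze a single intersection $B_u \cap B_v$. Two branches are either independent, in which case $u \not\subset v$ and $v \not\subset u$ so $B_u \cap B_v = \emptyset$, or comparable, in which case one prefix extends the other and $B_u \cap B_v$ equals the smaller of the two branches. Hence every nonempty intersection is again a branch of $C_n$, and $\mathcal{B} \cap \mathcal{B}'$ is a well-defined set of branches. Next I would check that it is complete and pairwise independent. Completeness is immediate from completeness of $\mathcal{B}$ and $\mathcal{B}'$: every point $c \in C_n$ lies in a unique member of each set, and these two branches share $c$, hence are comparable, so $c$ lies in their nonempty intersection. Pairwise independence follows most cleanly from the tree picture, by identifying the leaves of $\mathcal{T}_{\mathcal{B}} \cup \mathcal{T}_{\mathcal{B}'}$ with the members of $\mathcal{B} \cap \mathcal{B}'$; the leaves of any finite rooted subtree of $\mathcal{T}_n$ are automatically pairwise independent branches.

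The step I expect to require the most care is showing that $\mathcal{B} \cap \mathcal{B}'$ is a genuine \emph{expansion} of $\mathcal{B}$ (and symmetrically of $\mathcal{B}'$), rather than merely a refinement. In tree terms, $\mathcal{T}_{\mathcal{B} \cap \mathcal{B}'} = \mathcal{T}_{\mathcal{B}} \cup \mathcal{T}_{\mathcal{B}'}$ is a finite rooted subtree containing $\mathcal{T}_{\mathcal{B}}$, and I must realize the passage from $\mathcal{T}_{\mathcal{B}}$ to this larger tree by a finite sequence of elementary expansions. The essential point is that each branch $B_{u_i}$ that gets strictly refined is refined into a \emph{complete} independent family inside $B_{u_i}$, so that every internal vertex of $\mathcal{T}_{\mathcal{B} \cap \mathcal{B}'}$ carries its full set of $n$ children. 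This is exactly where completeness is indispensable: as already observed for $\mathcal{T}_{\mathcal{B}}$, every vertex of a tree arising from a complete set that is neither root nor leaf has all $n$ descendants present, which is precisely what makes each subdivision an elementary expansion rather than a partial one. Growing $\mathcal{T}_{\mathcal{B}}$ by repeatedly replacing a leaf that is internal in the target tree by its $n$ children then terminates after finitely many steps, since $\mathcal{T}_{\mathcal{B} \cap \mathcal{B}'}$ is finite, giving the desired common expansion.
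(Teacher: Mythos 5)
Your proposal is correct and takes essentially the same route as the paper: the paper proves this lemma with the single observation, stated just before the lemma, that the intersection $\mathcal{B}\cap\mathcal{B}'$ is a common expansion of both sets and is again a complete set of pairwise independent branches. Your write-up simply supplies the details the paper leaves implicit --- comparability of overlapping prefixes, completeness forcing every internal vertex of $\mathcal{T}_{\mathcal{B}}\cup\mathcal{T}_{\mathcal{B}'}$ to carry all $n$ children so each refinement step is a genuine elementary expansion, and finiteness of the target tree to terminate the process.
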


\subsection{Self-similar bijections}
For convenience, we will use the following notation. Let $\sigma \in \Sym(n)$ be any element of the symmetric group of $n$ elements. Given any word $w = w_1w_2w_3 \dots \in \{1, \ldots, n\}^\mathbb{N}$ we define $\sigma(w) = \sigma(w_1)\sigma(w_2)\sigma(w_3)\dots \in \{1, \ldots, n\}^\mathbb{N}$.

\begin{defi}[Self-similar bijection]
\label{def:selfsimilar}
Let $u, u'$ be two finite words in $\{1, \ldots, n\}$. We say that a bijection $g:B_{u}\to B_{u'}$ is {\em self-similar} if there exists $\sigma \in \Sym(n)$ such that $g(uw)=u'\sigma(w)$ for every $w \in \{1, \ldots, n\}^\mathbb{N}$.
\end{defi}

Fix an integer $n\ge 2$, a subgroup $H \leq \Sym(n)$ and consider the set $V_n(H)$ of all bijections $g:C_n \to C_n$ for which there exists  a finite complete set $\mathcal B = \mathcal B(g)$ of pairwise independent branches such that, $\forall B\in \mathcal B$, we have:

\begin{enumerate}
\item The set $g(B)$ is a branch, and
\item The restriction 
$g_{\mid B}: B \to g(B)$ is self-similar. 
\end{enumerate} 

Let $B_u = B, g$ and $\sigma$ as in Definition \ref{def:selfsimilar}. Then $g$ acts on the elementary expansion of $B$ by permuting its leaves according to $\sigma$, that is, taking $B_i$ to $B_{\sigma(i)} \forall i \in \{1, \dots, n\}$. Conversely, the elementary reduction of $\{B_{1}, \dots,B_{n}\}$ to $B$ is only possible if $g$ permutes its leaves according to $\sigma$, that is, taking $B_i$ to $B_{\sigma(i)}\forall i \in \{1, \dots, n\}$. We record the following observation:

\begin{lemma}
Let $g:C_n\to C_n$ be a bijection and $\mathcal{B}$ a set of pairwise independent branches satisfying (1) and (2) above. If $\mathcal{B}'$ is an expansion of $\mathcal{B}$ then $\mathcal{B}'$ also satisfies (1) and (2).
\label{lem:expansions}
\end{lemma}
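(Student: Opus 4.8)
The plan is to reduce to the case of a single elementary expansion and then verify conditions (1) and (2) by a direct computation with the self-similarity formula. Since by definition an expansion $\mathcal{B}'$ is obtained from $\mathcal{B}$ by a finite sequence of elementary expansions, and since at each step the branches left untouched continue to satisfy (1) and (2) trivially, it suffices to prove the following: if $B_u \in \mathcal{B}$ and we replace $B_u$ by its elementary expansion $\{B_{u1}, \ldots, B_{un}\}$, then each $B_{ui}$ satisfies (1) and (2). The lemma then follows by induction on the length of the sequence of elementary expansions realizing $\mathcal{B}'$.

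For this single step I would use the hypothesis that $g_{\mid B_u}: B_u \to g(B_u)$ is self-similar. Writing $B_{u'} = g(B_u)$, Definition \ref{def:selfsimilar} supplies a permutation $\sigma \in \Sym(n)$ with $g(uw) = u'\sigma(w)$ for every $w \in \{1,\ldots,n\}^{\mathbb{N}}$. Fix $i \in \{1,\ldots,n\}$ and take an arbitrary point $uiw' \in B_{ui}$, where $w' \in \{1,\ldots,n\}^{\mathbb{N}}$. Applying the formula to the word $w = iw'$ and using the defining action $\sigma(iw') = \sigma(i)\sigma(w')$, I obtain
\[
g(uiw') = u'\sigma(iw') = u'\sigma(i)\,\sigma(w').
\]

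This single computation establishes both conditions at once. Reading the right-hand side as a point with prefix $u'\sigma(i)$ shows that $g$ maps $B_{ui}$ into $B_{u'\sigma(i)}$; and since $g_{\mid B_u}$ is a bijection onto $B_{u'}$ while the branches $B_{u'1}, \ldots, B_{u'n}$ form a complete, pairwise independent set covering $B_{u'}$, in fact $g(B_{ui}) = B_{u'\sigma(i)}$, which is a branch, giving (1). For (2), the displayed identity reads $g((ui)w') = (u'\sigma(i))\,\sigma(w')$, which is precisely the self-similarity condition of Definition \ref{def:selfsimilar} for the restriction $g_{\mid B_{ui}}: B_{ui} \to B_{u'\sigma(i)}$, with the \emph{same} permutation $\sigma$. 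Hence every branch in the elementary expansion of $B_u$ satisfies (1) and (2), completing the inductive step. I expect no serious obstacle here; the only point deserving a little care is the surjectivity claim $g(B_{ui}) = B_{u'\sigma(i)}$, which follows from the bijectivity of $g_{\mid B_u}$ together with the fact that $g$ permutes the elementary expansion of $B_u$ onto that of $B_{u'}$ according to $\sigma$, as recorded in the discussion preceding the lemma.
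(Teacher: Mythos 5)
Your proof is correct and takes essentially the same route as the paper: the paper records this lemma as an immediate observation following its remark that $g$ sends the elementary expansion of $B_u$ to that of $g(B_u)$ by taking $B_{ui}$ to $B_{u'\sigma(i)}$, which is exactly the content of your computation $g(uiw') = u'\sigma(i)\,\sigma(w')$. Your explicit induction on the number of elementary expansions, and your care over surjectivity of $g(B_{ui}) = B_{u'\sigma(i)}$, merely spell out what the paper leaves implicit.
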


We have the following useful consequence:

\begin{corollary}
\label{cor:welldef}
Let $g: C_n \to C_n$ be a bijection satisfying (1) and (2). As a transformation of $C_n$, $g$ is independent of the choice of set of branches $\mathcal{B}(g)$. 
\end{corollary}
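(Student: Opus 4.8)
The plan is to reduce everything to the behaviour of the self-similar data under expansion, and then to invoke the existence of a common expansion. Concretely, suppose $\mathcal{B}_1$ and $\mathcal{B}_2$ are two finite complete sets of pairwise independent branches, both satisfying conditions (1) and (2) for $g$. I want to show that the transformation of $C_n$ prescribed by the self-similar restrictions on $\mathcal{B}_1$ coincides with the one prescribed on $\mathcal{B}_2$.

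The heart of the argument is a single computation showing that an \emph{elementary expansion of one branch does not alter the prescribed map}. Take a branch $B_u \in \mathcal{B}_1$ on which $g$ is self-similar, say $g(uw) = u'\sigma(w)$ with $g(B_u) = B_{u'}$ and $\sigma \in \Sym(n)$. Writing an arbitrary point of $B_{ui}$ as $uiw'$ with $i \in \{1,\dots,n\}$ and $w' \in \{1,\dots,n\}^{\mathbb N}$, we have
\[
g(uiw') = u'\sigma(iw') = u'\sigma(i)\,\sigma(w'),
\]
so $g(B_{ui}) = B_{u'\sigma(i)}$ and $g_{\mid B_{ui}}$ is again self-similar, with the \emph{same} permutation $\sigma$. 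Since $\sigma$ is a bijection of $\{1,\dots,n\}$, the targets $\{B_{u'\sigma(i)}\}_{i}$ are exactly the elementary expansion $\{B_{u'1},\dots,B_{u'n}\}$ of $B_{u'}$; in particular the expanded data is consistent and describes the very same point map, since we have only subdivided the domain without changing any value of $g$. Iterating, an arbitrary expansion of $\mathcal{B}_1$ carries the same self-similar data and prescribes the same transformation (this also re-proves the relevant instance of Lemma \ref{lem:expansions}).

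With this in hand the corollary follows formally. By Lemma \ref{lem:commonexpansion}, the sets $\mathcal{B}_1$ and $\mathcal{B}_2$ admit a common expansion $\mathcal{D}$. By Lemma \ref{lem:expansions}, $\mathcal{D}$ again satisfies (1) and (2), and by the previous paragraph the transformation prescribed on $\mathcal{B}_1$ equals that prescribed on $\mathcal{D}$, which in turn equals the one prescribed on $\mathcal{B}_2$. Hence the two agree, proving that $g$ is independent of the choice of $\mathcal{B}(g)$.

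I do not expect a serious obstacle here; the argument is routine once the two preceding lemmas are available. The one point deserving care is the bookkeeping in the displayed computation: one must check not merely that each restriction $g_{\mid B_{ui}}$ stays self-similar, but that the image branches $B_{u'\sigma(i)}$ reassemble into a genuine elementary expansion of $B_{u'}$ — which is precisely where the bijectivity of $\sigma$ is used — so that completeness and pairwise independence of the image family are preserved and the local pieces glue to the same global bijection.
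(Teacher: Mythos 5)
Your proof is correct and takes essentially the same route as the paper: your displayed computation $g(uiw') = u'\sigma(i)\,\sigma(w')$ is precisely the observation the paper records just before Lemma \ref{lem:expansions}, and deducing the corollary by combining that lemma with the common expansion of Lemma \ref{lem:commonexpansion} is exactly the intended argument (which the paper leaves implicit). Your extra remark on why bijectivity of $\sigma$ makes the image branches a genuine elementary expansion is a worthwhile detail, but not a departure from the paper's approach.
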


Now, we proceed to prove that the set $V_n(H)$ is, indeed, a group:

\begin{prop}\label{comp}
$V_n(H)$ is a group under composition. 
\end{prop}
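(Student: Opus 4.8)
The plan is to verify the three group axioms---closure under composition, existence of inverses, and the presence of an identity---for $V_n(H)$, with closure being the substantive part and the identity being immediate (the identity map is self-similar with $\sigma = \mathrm{id}$ on the trivial branch $B_\emptyset = C_n$). The key technical tool throughout is Lemma \ref{lem:commonexpansion}, which lets us pass to a common expansion of any two given sets of branches, together with Lemma \ref{lem:expansions} and Corollary \ref{cor:welldef}, which guarantee that enlarging the defining set of branches preserves properties (1) and (2) and does not change the underlying map.

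For closure, let $f, g \in V_n(H)$ with defining sets $\mathcal{B}(f)$ and $\mathcal{B}(g)$; I want to show $g \circ f \in V_n(H)$. The natural set of branches to test against $g \circ f$ is $\mathcal{B}(f)$, but the images $f(\mathcal{B}(f))$ need not be compatible with the set $\mathcal{B}(g)$ on which $g$ is controlled. The fix is to take a common expansion: consider the set $f(\mathcal{B}(f))$, which is itself a complete set of pairwise independent branches (since $f$ is a bijection carrying branches to branches), and form a common expansion $\mathcal{C}$ of $f(\mathcal{B}(f))$ and $\mathcal{B}(g)$ via Lemma \ref{lem:commonexpansion}. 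Then $\mathcal{C}$ is a valid defining set for $g$ by Lemma \ref{lem:expansions}. Pulling $\mathcal{C}$ back through $f$ gives a set $\mathcal{B}'$ which, being an expansion of $\mathcal{B}(f)$, is a valid defining set for $f$ with $f(\mathcal{B}') = \mathcal{C}$. On each branch $B \in \mathcal{B}'$, the map $f$ is self-similar onto a branch $f(B) \in \mathcal{C}$, and $g$ is self-similar on $f(B)$; since the composition of two self-similar bijections is self-similar (the permutations compose, and $H$ being a subgroup of $\Sym(n)$ is closed under composition), $g \circ f$ satisfies (1) and (2) on $\mathcal{B}'$. Hence $g \circ f \in V_n(H)$.

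For inverses, given $f \in V_n(H)$ with defining set $\mathcal{B}(f)$, the natural candidate defining set for $f^{-1}$ is the image set $f(\mathcal{B}(f))$, which is again complete and pairwise independent. On each branch $f(B)$, the restriction of $f^{-1}$ is the inverse of a self-similar bijection; since the inverse of a self-similar map with permutation $\sigma$ is self-similar with permutation $\sigma^{-1}$, and $H$ is closed under inversion, each restriction of $f^{-1}$ is self-similar onto the branch $B$. Thus $f^{-1}$ satisfies (1) and (2) and lies in $V_n(H)$.

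The main obstacle is the bookkeeping around expansions in the closure argument: one must check carefully that the common expansion $\mathcal{C}$ pulls back cleanly through $f$ to an expansion of $\mathcal{B}(f)$, i.e. that $f$ intertwines elementary expansions on the image side with elementary expansions on the source side. This follows because $f$ is self-similar on each branch and hence commutes with the operation of splitting a branch into its $n$ children (up to the permutation $\sigma$), but it is the step that requires genuine care rather than routine verification. The essential algebraic input---that composing and inverting the local permutations keeps them inside $H$---is exactly where the hypothesis $H \leq \Sym(n)$ (rather than an arbitrary subset) is used.
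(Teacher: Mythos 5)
Your proof is correct and follows essentially the same route as the paper: its closure argument likewise forms a common expansion of $f(\mathcal{B})$ and $\mathcal{B}(g)$ via Lemma \ref{lem:commonexpansion}, represents $f$ on the pulled-back set $f^{-1}(\mathcal{B}'')$ using Corollary \ref{cor:welldef}, and concludes because the local permutations compose inside $H$. The only difference is that you spell out the inverse and identity checks (and the intertwining of expansions under $f$), which the paper dismisses as the straightforward part.
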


\begin{proof}
The only non-straightforward part of the proof is to show that $V_n(H)$ is closed under composition.
Let $f,g \in V_n(H)$ and $\mathcal{B}$ (resp. $\mathcal{B}'$) a complete set of pairwise independent branches associated to $f$ (resp. $g$). By Lemma \ref{lem:commonexpansion}, $f(\mathcal{B})$ and $\mathcal{B}'$ have a common expansion, say $\mathcal{B}''$. Now, by Corollary \ref{cor:welldef}, we may represent $f$ (resp. $g$) using the set $f^{-1}(\mathcal{B}'')$ (resp. $\mathcal{B}'')$. So, for any word $w \in \{1, \ldots, n\}^\mathbb{N}$: 
$$g \circ f = g(f(uw))= f(u'\sigma(w))=  u''(\sigma' \circ \sigma)(w).$$
Thus $g \circ f$ is another element of $V_n(H)$. 
\end{proof}

Using the incarnation of $C_n$ as the set of geodesics in $\mathcal{T}_n$ issued from the root, plus the fact that elements of $V_n(H)$ are self-similar outside a finite subtree of $\mathcal{T}_n$, we immediately obtain: 

\begin{lemma}
For every $n\ge 2$, $H \leq \Sym(n)$, the group $V_n(H)$ is a subgroup of $\Homeo(C_n)$. 
\end{lemma}

\subsection{Tree-pair representation} Following \cite{CF}, we now explain a useful way of encoding elements of $V_{n}(H)$ in terms of pairs of trees. Given $g\in V_{n}(H)$, let $\mathcal{B}$ and $g(\mathcal{B})$ be two complete sets of pairwise independent branches which satisfy (1) and (2); in light of Corollary \ref{cor:welldef}, the action of $g$ on $C_n$ is independent of this choice. 

As previously explained, $\mathcal{B}$ and $g(\mathcal{B})$ each determine a unique finite tree, denoted $\mathcal{T}_\mathcal{B}$ and $\mathcal{T}_{g(\mathcal{B})}$ respectively. Observe that $g$ induces a bijective map $\tau = \tau(g)$ from the set of leaves of $\mathcal{T}_\mathcal{B}$ to that of $\mathcal{T}_{g(\mathcal{B})}$; in particular, $\tau$ may be regarded as an element of the permutation group $\Sym(k)$ on $k$ elements, where $k$ is the number of leaves of $\mathcal{T}_\mathcal{B}$. Finally, for every leaf of $\mathcal{T}_{g(\mathcal{B})}$ we have a label $\sigma \in H$ imposing how $g$ acts on every independent branch of $g(\mathcal{B})$. These $k$ labels are expressed as an element $\sigma = (\sigma_1, \dots, \sigma_{k}) \in H^k$. 

Summarizing, we have encoded the action of $g$ by $(\mathcal{T}_\mathcal{B},\mathcal{T}_{g(\mathcal{B})}, \tau, \sigma)$. Of course, this way of encoding is far from unique: we say that $g$ is \textit{reduced} if $\mathcal{B}$ and $g(\mathcal{B})$ cannot be reduced any further. Finally, we will simply write $(\mathcal{T},\mathcal{T}',\tau,\sigma)$ for a tree-pair representative of $g$ whenever it is not necessary to specify the particular branches giving rise to the trees. 


\section{Strand diagrams}

In this section we introduce $n$-strand diagrams, which are a family of directed graphs with labelled vertices. They are a generalization of the strand diagrams presented in \cite{BM}, which have no labels. As we will see, $n$-strand diagrams are able to represent elements of any Thompson-like group. Once they are defined, we establish an equivalence relation between them which will turn to be the key to solve the conjugacy problem. 

\subsection{Basics about graphs} Let $\Gamma$ be a directed graph. We will assume that the vertices of $\Gamma$ are labelled by elements of $\Sym(n)$, so let $V(\Gamma) = \linebreak \{v_1^{\sigma_1}, v_2^{\sigma_2},\dots, v_k^{\sigma_k}\}$ be the set of vertices $v_i$ with label $\sigma_i \in \{\emptyset\}  \cup \Sym(n) \backslash Id$, for some $n \geq 2$. When $\sigma_i = \emptyset$, the corresponding vertex has no labels. This notation will only be used in this subsection, as we need to distinguish labelled and nonlabelled vertices, so we will simply write $v$ when the label is not relevant for the definitions or proofs. Let $E(\Gamma) = \{e_1,e_2,\dots, e_s\} \subset V(\Gamma) \times V(\Gamma)$ be the set of edges, which have the form $e = (v_i^{\sigma_i},v_j^{\sigma_j})$. An edge $e = (v,v')$ is oriented from $v$ to $v'$. An \textit{oriented path} is a sequence of edges $\{e_1,...,e_t\} = \{(v_{i(1)},v_{j(1)}),\dots, (v_{i(t)},v_{j(t)})\}$ such that $v_{j(k)} = v_{i(k+1)} \, \forall k \in \{1,\dots, t-1\}$. In addition, if $v_{j(t)} = v_{i(1)}$, we have an \textit{oriented loop}. We say that $\Gamma$ is \textit{acyclic} if it has no oriented loops.

\begin{defi}[Degree]
The \textit{degree} of a vertex $v \in V(\Gamma)$ is the number of edges which have $v$ as endpoint, that is, those which have the form $(v,v')$ or $(v',v)$ for some $v' \in V(\Gamma)$. If some edge has the form $(v,v)$, it is counted twice.
\end{defi}

\begin{defi}[Source/sink]
A vertex $v$ is a \textit{source} (resp. a \textit{sink}) for a finite set of directed edges if they have $v$ as start point (resp. endpoint). An empty-labelled vertex $v^{\emptyset}$ is a \textit{main source} (resp. a \textit{main sink}) if it is a source (resp. a sink) of degree one. A graph $\Gamma$ is a \textit{digraph} if it has exactly one main source and one main sink. 
\end{defi}

\begin{defi}[$n$-split/$n$-merge/$\sigma$-vertex]
Let $n \geq 2$. An \textit{$n$-split} (resp. an \textit{$n$-merge}) is an empty-labelled vertex $v^{\emptyset}$ of degree $n+1$ which is a sink for one edge and a source for the others (resp. a source for one edge and a sink for the others).
A \textit{$\sigma$-vertex} is a labelled vertex $v^{\sigma}$ of degree $2$ which is a sink for one edge and a source for the other. 
\end{defi}

In the graphs depicted on this paper, $n$-splits and $n$-merges will be represented as black vertices with no label, while $\sigma$-vertices will be represented as white vertices with the corresponding label; see Figure \ref{fig:edgeorder}. 

\begin{defi}[Pitchfork graph]
A \textit{pitchfork graph} is defined as any graph whose vertices are only main sources, main sinks, $n$-splits, $n$-merges or $\sigma$-vertices. 
\end{defi}

\subsection{Strand diagrams} 
Let $g \in V_n(H)$ for some $n \geq 2$, $H \leq \Sym(n)$, and $(\mathcal{T},\mathcal{T}', \tau, \sigma)$ a tree-pair representative of $g$. We build a graph from $g$ linking the leaves of $\mathcal{T}$ and $\mathcal{T}'$ as stated by $\tau$ and appending to the roots of both $\mathcal{T}$ and $\mathcal{T}'$ an edge and a vertex. See Figure \ref{fig:construction}. We give the orientation from the vertex appended in $\mathcal{T}$ to the leaves of $\mathcal{T}$, from the leaves of $\mathcal{T}$ to the leaves of $\mathcal{T}'$ and finally from them to the appended vertex of $\mathcal{T}'$. In this way, we obtain a directed acyclic labelled pitchfork digraph  called the \textit{$n$-strand diagram} of $g$.

If $\Gamma$ is not planar, that is, it cannot be embedded in the plane, there must be \textit{crossings} between some edges. In other words, two edges intersect in a point which is not a vertex of $\Gamma$. To distinguish isomorphic graphs with different crossings, we impose a rotation system:
\begin{defi}[Rotation system]
Let $\Gamma$ be a pitchfork graph. A \textit{rotation system} of $\Gamma$ is a map $\rho_{\Gamma}: E(\Gamma) \longrightarrow \{0, \dots, n\}^2$ which gives an order to every edge of $\Gamma$ around its endpoints as follows:
\begin{enumerate}
\item A counterclockwise order to the directed edges of an $n$-split $v$, where the $0$-th edge is the edge which has $v$ as sink.
\item A clockwise order to the directed edges of an $n$-merge $v$, where the $0$-th edge is the edge which has $v$ as source.
\item A $0$ to the edge which has a $\sigma$-vertex $v$ as sink, and a $1$ to the one which has $v$ as source.
\end{enumerate}
\end{defi}
It follows that every edge of $\Gamma$ has a rotation pair assigned depending on the labels of its endpoints; see Figure \ref{fig:edgeorder}. Observe that $\rho_{\Gamma}$ completely determines the crossings of $\Gamma$.

\begin{figure}[h]
\labellist
\pinlabel $0$ at 12 46
\pinlabel $0$ at 56 17 
\pinlabel $1$ at 47 60
\pinlabel $2$ at 56 66
\pinlabel $n-1$ at 71 66
\pinlabel $n$ at 80 60
\pinlabel $1$ at 3 6
\pinlabel $2$ at 12 0
\pinlabel $n-1$ at 27 0
\pinlabel $n$ at 38 6
\pinlabel $0$ at 95 46 
\pinlabel $1$ at 95 17
\pinlabel $\sigma$ at 110 35
\endlabellist
\centering
\includegraphics[width=0.4\textwidth]{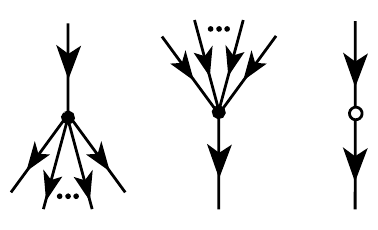}
\caption{An $n$-split, an $n$-merge and a $\sigma$-vertex with the rotation system $\rho$.}
\label{fig:edgeorder}
\end{figure}

We obtain an $n$-strand diagram as a consequence of the process described at the beginning of this subsection, but not all $n$-strand diagrams are obtained in this way. We proceed to give the definition:

\begin{defi}[$n$-strand diagram]
\label{def:strand}
An \textit{$n$-strand diagram} is a finite labelled directed acyclic pitchfork digraph $\Gamma$. Two $n$-strand diagrams $\Gamma$ and $\Gamma'$ are equal if there exists an isomorphism $\phi: \Gamma \rightarrow \Gamma'$ such that:
\begin{enumerate}
\item $\phi(\rho_{\Gamma}) = \rho_{\Gamma'}$, and
\item $\phi(v^{\sigma}) = \phi(v)^{\sigma}$, $\forall v \in V(\Gamma)$.
\end{enumerate} 
\end{defi}

\subsection{Closed strand diagrams}
Let $\Gamma$ be an $n$-strand diagram. Suppose that we identify its main source and main sink, and erase the resulting vertex, as Figure \ref{fig:construction} shows. Let $e$ be the new edge created by this identification. Then $\Gamma$ is naturally immersed in an annulus  as follows: let $c(\alpha) \in H^1(\Gamma, \mathbb{Z})$ be the counterclockwise winding number of a directed loop $\alpha$ around the central hole. Then we impose that $c(\alpha_i) = 1$ for all directed loops $\alpha_i \subset \Gamma$ containing $e$. We denote as $\overline{\Gamma}$ the closure of an $n$-strand diagram $\Gamma$ by this process. Again, not all closed $n$-strand diagrams are obtained in this way. 

\begin{defi}[Closed $n$-strand diagram]
A \textit{closed $n$-strand diagram} is a finite labelled directed pitchfork graph without main sources and main sinks, where $c(\alpha) >0$ for every oriented loop $\alpha \subset \Gamma$. Two closed $n$-strand diagrams $\Gamma$ and $\Gamma'$ are equal if there exists an isomorphism $\phi: \Gamma \rightarrow \Gamma'$ such that:
\begin{enumerate}
\item $\phi(\rho_{\Gamma}) = \rho_{\Gamma'}$,
\item $\phi(v^{\sigma}) = \phi(v)^{\sigma}$, $\forall v \in V(\Gamma)$, and
\item $c(\alpha) = c(\phi(\alpha))$ for every oriented loop $\alpha \subset \Gamma$.
\end{enumerate}
\end{defi}

\begin{figure}[h]
\centering
\labellist
\pinlabel $1$ at 4 70
\pinlabel $1$ at 20 38
\pinlabel $2$ at 20 70
\pinlabel $2$ at 36 38
\pinlabel $3$ at 36 70
\pinlabel $3$ at 36 60
\pinlabel $4$ at 36 90
\pinlabel $5$ at 53 60
\pinlabel $5$ at 53 90
\pinlabel $4$ at 69 60
\pinlabel $\sigma_1$ at 10 28
\pinlabel $\sigma_2$ at 79 49
\pinlabel $\sigma_1$ at 214 33
\pinlabel $\sigma_2$ at 258 76
\pinlabel $\sigma_1$ at 124 33
\pinlabel $\sigma_2$ at 170 74
\pinlabel $e$ at 325 110
\endlabellist
\includegraphics[width=0.9\textwidth]{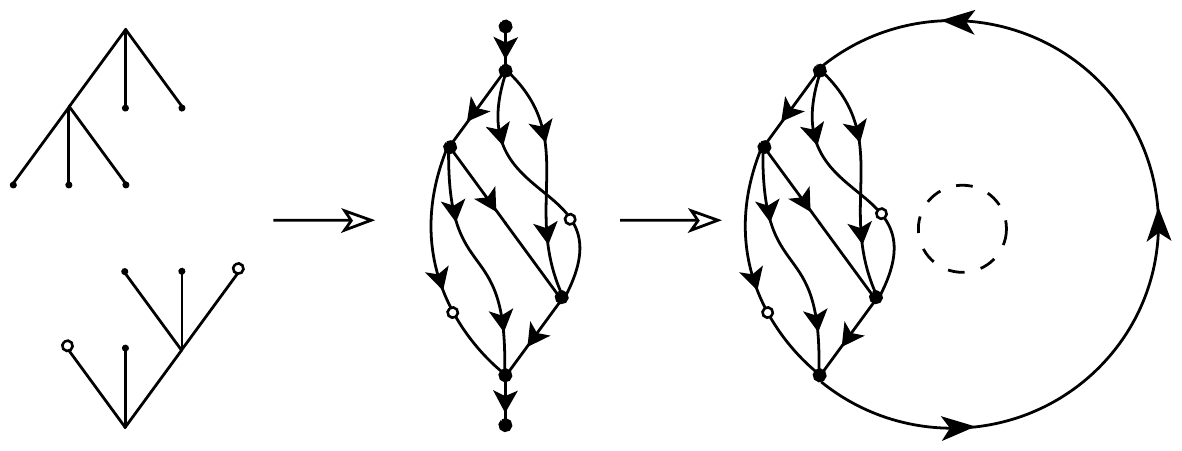}
\caption{A $3$-strand diagram and a closed $3$-strand diagram.}
\label{fig:construction}
\end{figure} 

\begin{remark}
\label{rem:samenumber}
Observe that an $n$-strand diagram is always connected, whilst a closed $n$-strand diagram do not need to be connected. In addition, every (closed) $n$-strand $\Gamma$ diagram has the same number of $n$-splits and $n$-merges. 
\end{remark}

\subsection{Equivalence relations between strand diagrams} We now define the set of transformations for modifying any (closed) $n$-strand diagram $\Gamma$. Let $v_1$, $v_2 \in V(\Gamma)$, we denote the \textit{subgraph between $v_1$ and $v_2$} to the set of all oriented paths which start at $v_1$ and end in $v_2$. The \textit{support} of all transformations will be $\varepsilon$-neighbourhoods of these subgraphs, assuming that $\Gamma$ is undirected and all edges have length 1. 

\begin{defi}[Type I reduction] Let $\sigma \in \Sym(n)$ be fixed. Let $\Gamma_0$ be the subgraph between an $n$-split $v_1$ and an $n$-merge $v_2$ such that the $i$-th edge of $v_1$ and the $\sigma(i)$-th edge of $v_2$ have the same $\sigma$-vertex as endpoint, $\forall i \in \{1,\dots,n\}$. A \textit{type I reduction} is a transformation that replaces the $\varepsilon$-neighbourhood of $\Gamma_0$ with a $\sigma$-vertex. See Figure \ref{fig:conred}.

If $\Gamma_0$ is the subgraph between an $n$-split $v_1$ and an $n$-merge $v_2$ such that the $i$-th edge of $v_1$ is the $i$-th edge of $v_2$ $\forall i \in \{1,\dots,n\}$, an \textit{identity type I reduction} replaces its $\varepsilon$-neighbourhood with an edge.
\end{defi} 

\begin{figure}[h]
\labellist
\pinlabel $\tau$ at 96 60
\pinlabel $\tau$ at 146 60
\pinlabel $\tau$ at 186 50
\endlabellist
\centering
\includegraphics[width=0.6\textwidth]{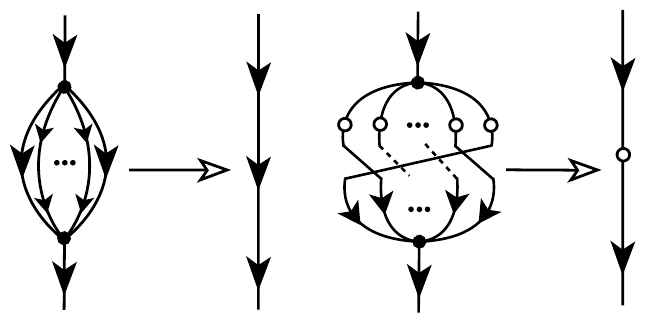}
\caption{An identity type I reduction and a type I reduction with $\tau$-vertices, where $\tau$ takes $i$ to $i+1\mod n$.}
\label{fig:conred}
\end{figure}

\begin{defi}[Type II reduction]
Let $\sigma \in \Sym(n)$ be fixed. Consider the $\varepsilon$-neighbourhood of the subgraph $\Gamma_0$ between an $n$-merge $v_1$ and an $n$-split $v_2$ whose $0$-th edges have the same $\sigma$-vertex as endpoint. A \textit{type II reduction} replaces the $\varepsilon$-neighbourhood of $\Gamma_0$ with the $\varepsilon$-neighbourhood of $n$ $\sigma$-vertices such that the $i$-th edge of $v_1$ and the $\sigma(i)$-th edge of $v_2$ have the same $\sigma$-vertex as endpoint, $\forall i \in \{1,\dots,n\}$. See Figure \ref{fig:expred}.

If $\Gamma_0$ is the subgraph between an $n$-merge $v_1$ and an $n$-split $v_2$ such that the $0$-th edge of $v_1$ is the $0$-th edge of $v_2$, an \textit{identity type II reduction} replaces the $\varepsilon$-neighbourhood of $\Gamma_0$ with a set of $n$ edges such that the $i$-th edge of $v_1$ is the $i$-th edge of $v_2$, $\forall i \in \{1,\dots,n\}$.
\end{defi}

\begin{figure}[h]
\labellist
\pinlabel $\sigma$ at 163 60
\pinlabel $\sigma$ at 232 50
\endlabellist
\centering
\includegraphics[width=0.7\textwidth]{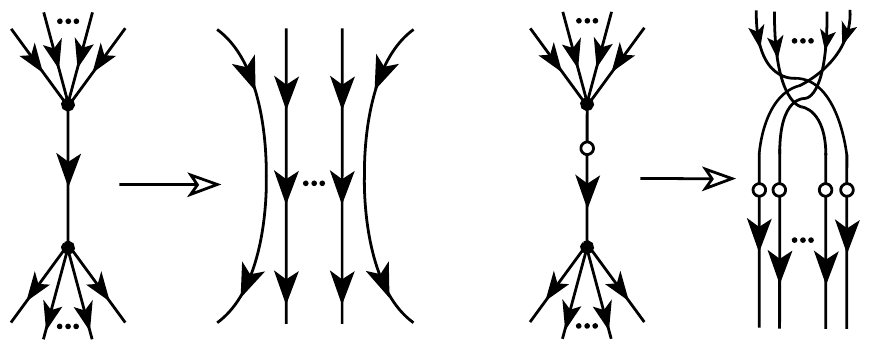}
\caption{An identity type II reduction and a type II reduction with $\sigma$-vertices, where $\sigma$ takes $i$ to $-i+1 \mod n$.}
\label{fig:expred}
\end{figure}

\begin{defi}[Type III reduction] A \textit{type III reduction} replaces the $\varepsilon$-neighbourhood of the subgraph $\Gamma_0$ consisting of one edge whose endpoints are a $\sigma_1$-vertex and a $\sigma_2$-vertex, with a $(\sigma_2 \circ \sigma_1)$-vertex. If $\sigma_2 \circ \sigma_1 = Id$, it replaces the $\varepsilon$-neighbourhood of $\Gamma_0$ with an edge.
\end{defi}

\begin{remark}
\label{rem:inv}
Type I and II reductions have an inverse, since they are uniquely determined by the $\sigma$-vertex involved (or the identity). However, type III reductions do not have an inverse, since the same element $\sigma \in \Sym(n)$ can be obtained composing different pairs of elements.
\end{remark}

\begin{defi}[Free loop]
Let $\Gamma$ be a closed $n$-strand diagram.  A \textit{free loop} of $\Gamma$ is an oriented loop which has neither $n$-splits nor $n$-merges.
\end{defi}

The following reduction is a composition of type I and type II reductions and their inverses. We define it for convenience, as we will see in Proposition \ref{prop:equiv}.

\begin{defi}[Type IV reduction] Let $\Gamma_0$ be the $\varepsilon$-neighbourhood of the subgraph consisting of an edge $e$ whose endpoints are a $n$-merge and a $\sigma$-vertex, where $e$ is the $0$-th edge of the $n$-merge. Consider $\Gamma_1$, obtained from $\Gamma_0$ by performing the inverse of a type I reduction on the $\sigma$-vertex. Let $\Gamma_2$ be obtained from $\Gamma_1$ by performing a type II reduction on the $n$-merge and the new $n$-split created by the previous reduction. A \textit{type IV reduction} replaces $\Gamma_0$ with $\Gamma_2$. The symmetric case (an $n$-split whose $0$-th edge has a $\sigma$-vertex as endpoint) is treated in a similar way.

We can also perform a type IV reduction when $\Gamma_0$ is a free loop containing only one $\sigma$-vertex, since the inverse of a type I reduction creates an $n$-split and an $n$-merge which share the same $0$-th edge. In this case the type IV reduction replaces $\Gamma_2$ by $\Gamma_0$. By Remark \ref{rem:inv} this reduction is well defined. All cases are depicted on Figure \ref{fig:brired}.
\end{defi}

\begin{figure}[h]
\labellist
\pinlabel $\Gamma_2$ at 40 140
\pinlabel $\Gamma_1$ at 105 145
\pinlabel $\Gamma_0$ at 135 150
\pinlabel $\Gamma_2$ at 205 200
\pinlabel $\Gamma_1$ at 270 210
\pinlabel $\Gamma_0$ at 305 190
\pinlabel $\Gamma_2$ at 90 30
\pinlabel $\Gamma_1$ at 210 30
\pinlabel $\Gamma_0$ at 330 30
\endlabellist
\centering
\includegraphics[width=0.85\textwidth]{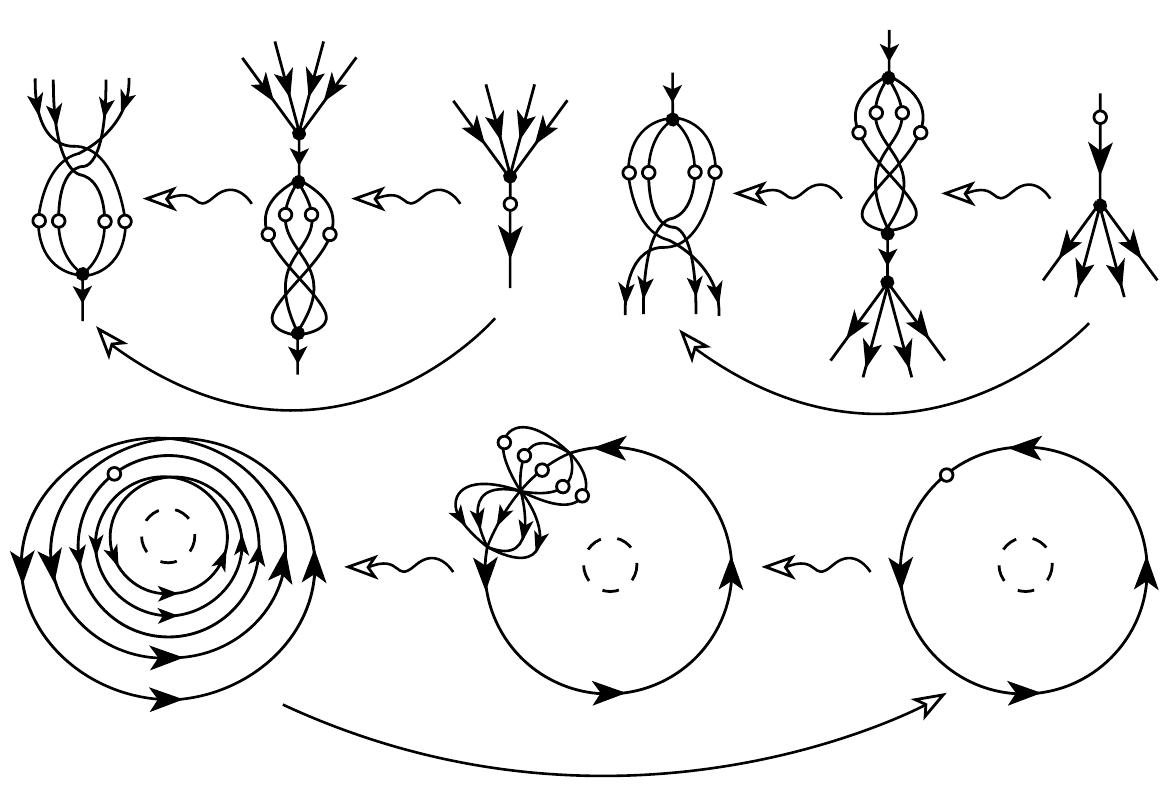}
\caption{Type IV reductions for $n =4, 5$ with $\sigma$-vertices, where $\sigma$ takes $i$ to $-i+1 \mod n$.}
\label{fig:brired}
\end{figure}

\begin{defi}[Conjugating transformation]
Let $\Gamma_0$ be a subgraph consisting of a free loop containing a sequence of $\sigma_i$-vertices, $i \in \{1,\dots,n\}$ which occurs in cyclic order. Let $\Gamma_i$ be the subgraph obtained from $\Gamma_0$ by performing type III reductions until we have a $(\sigma_{i-1} \circ \dots \circ \sigma_1 \circ \sigma_n \circ \dots \circ \sigma_i)$-vertex, for some $i \in \{1,\dots,n\}$. We define a \textit{conjugating transformation} as an exchange from $\Gamma_i$ to $\Gamma_j$ $\forall i,j \in \{1,\dots,n\}$. See Figure \ref{fig:conj}.
\end{defi}

\begin{figure}[h]
\labellist
\pinlabel $\sigma_1$ at 110 55
\pinlabel $\sigma_2$ at 115 20
\pinlabel $\sigma_2\circ \sigma_1$ at 25 30
\pinlabel $\sigma_1\circ \sigma_2$ at 295 30
\endlabellist
\centering
\includegraphics[width=1\textwidth]{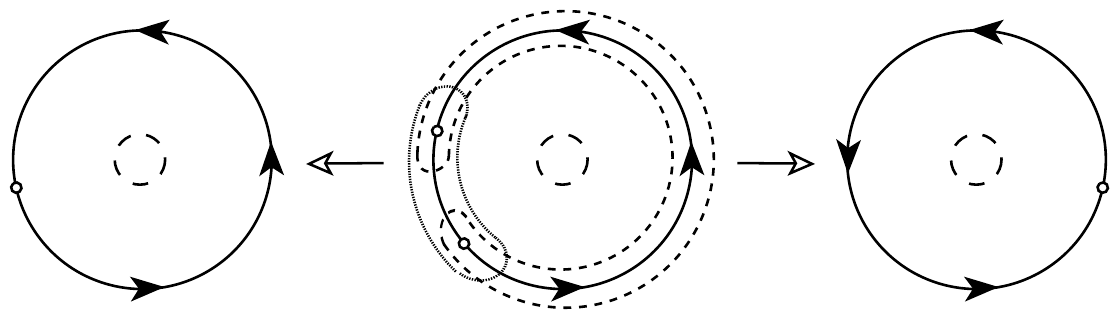}
\caption{A conjugating transformation with two $\sigma$-vertices.}
\label{fig:conj}
\end{figure}

\begin{remark}
\label{rem:conjugatingtransformation} 
Observe that if all $\sigma_i \in H$, then all $\sigma_{i-1} \circ \dots \circ \sigma_1 \circ \sigma_n \circ \dots \circ \sigma_i$ are conjugate in $H$. Thus a conjugating transformation does not change the conjugacy class of the $\sigma$-vertex obtained at the end.
\end{remark}

\begin{defi}[Equivalence]
\label{def:equiv}
Two $n$-strand diagrams are \textit{equivalent} if we can obtain one from the other by performing a finite sequence of type I, II, III and IV reductions and their inverses. 
\end{defi}

\begin{defi}[Reduction]
\label{def:red}
An $n$-strand diagram is \textit{reduced} if no type I, II, III and IV reductions can be performed on it. 
\end{defi}

\begin{prop}
\label{prop:equiv}
Every $n$-strand diagram is equivalent to a unique reduced $n$-strand diagram.
\end{prop}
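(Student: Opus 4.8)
The plan is to treat the type I–IV reductions as an abstract rewriting system on $n$-strand diagrams and to establish the statement by the standard route to unique normal forms: (i) \emph{termination}, so that repeated reduction always halts and a reduced diagram exists; and (ii) \emph{confluence}, so that the reduced diagram is unique. By Newman's Lemma, confluence follows from termination together with \emph{local confluence}, so the argument splits into one well-foundedness claim and one finite case-check of overlapping reductions. Since the equivalence of Definition~\ref{def:equiv} is precisely the reflexive–symmetric–transitive closure of the reduction relation, the Church–Rosser property (a consequence of confluence) then yields that two reduced diagrams are equivalent if and only if they are equal. This gives uniqueness, while existence is immediate from termination, and together they prove the proposition.

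For termination I would exhibit a well-founded complexity measure that strictly decreases under every reduction. The natural primary quantity is the number of $n$-splits of $\Gamma$ (equal to the number of $n$-merges by Remark~\ref{rem:samenumber}): type I, type II, and the free-loop version of type IV each destroy an $n$-split/$n$-merge pair and so lower this count, whereas type III and the non-loop type IV leave it unchanged. The main obstacle is exactly the non-loop type IV, which \emph{increases} the number of $\sigma$-vertices (it replaces a single $\sigma$-vertex on the $0$-th edge of an $n$-merge or $n$-split by $n$ copies on the remaining edges), so no count of vertices can serve as a secondary measure. The resolution I propose is to attach to each $\sigma$-vertex $v$ a \emph{potential} $p(v)$ measuring how far $v$ is from a canonical position, namely the number of $n$-merges lying above $v$ plus the number of $n$-splits lying below $v$ along directed paths, and to take as secondary measure the multiset $\{p(v)\}$ ordered by the Dershowitz--Manna multiset order. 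A non-loop type IV pushes a $\sigma$-vertex past exactly one such merge or split, so it removes one element of potential $p$ and inserts $n$ elements of potential $p-1$; this strictly decreases the multiset. One checks that type III, which fuses two adjacent $\sigma$-vertices, also strictly decreases this multiset (it deletes two elements and inserts one of no larger potential), while the split-reducing moves sit higher in the lexicographic order and need not be tracked further. Verifying these potential estimates, and that the finite acyclicity of $\Gamma$ makes the order well-founded, is the delicate part of the argument.

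For local confluence I would use that all four reductions are \emph{local}, each supported on an $\varepsilon$-neighbourhood of a bounded subgraph. Two applicable reductions with disjoint supports commute, and the two outcomes reduce to a common diagram in one further step each; the substantive work is the finitely many \emph{critical pairs} in which the supports overlap, such as a $\sigma$-vertex simultaneously eligible for a type III fusion and a type IV push, or an $n$-merge whose output $\sigma$-vertex also completes a type I pattern. Because the overlapping configurations are small, there are only finitely many patterns to inspect, and in each I would produce an explicit common diagram reachable from both outcomes; here it is convenient that type IV is, by construction, a composite of the invertible type I and II moves (Remark~\ref{rem:inv}), so any overlap involving type IV can be unfolded into overlaps among the basic moves. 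Completing this finite verification gives local confluence; with termination, Newman's Lemma yields confluence, and hence the unique reduced representative asserted by the proposition.
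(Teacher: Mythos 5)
Your proposal is correct and follows essentially the same route as the paper: the paper's proof likewise establishes termination (type I, II and free-loop IV reductions strictly decrease the number of $n$-splits and $n$-merges, while III and non-loop IV can occur only finitely often) together with local confluence via a finite case analysis of overlapping reductions, and concludes uniqueness by Newman's Lemma in all but name. If anything, your Dershowitz--Manna multiset potential for $\sigma$-vertices makes rigorous a step the paper dispatches rather tersely (its ``for the same reason, the number of type IV reductions is finite''), with the one small correction that a non-loop type IV push produces $n$ vertices of potential \emph{at most} $p-1$ rather than exactly $p-1$, which is all the multiset order requires.
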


\begin{proof}
Firstly, observe that the number of $n$-splits, $n$-merges and $\sigma$-vertices of any $n$-strand diagram is finite. Type I and II reductions decrease the number of $n$-splits and $n$-merges and type III and IV reductions do not change it. For the same reason, the number of type IV reductions is finite, so the number of type III reductions is also finite. Thus the process of reducing is finite. 

Secondly, reductions are locally confluent, that is,  suppose that we can perform two different reductions $r_0$, $r_1$ on an $n$-strand diagram $\Gamma$. Then there exist two sequences of reductions $s_0$, $s_1$ such that $s_1 \circ r_1(\Gamma) = s_0 \circ r_0(\Gamma)$. Let $\Gamma_i$ be the support of $r_i$. If $\Gamma_0 \cap \Gamma_1 = \emptyset$, then $r_0$ and $r_1$ commute, that is, $r_0 \circ r_1(\Gamma) = r_1 \circ r_0(\Gamma)$. If $\Gamma_0 \cap \Gamma_1 \neq \emptyset$, then one of the $r_i$ is a type I reduction and the other a type II reduction; or both are type III reductions:
\begin{enumerate}
\item If $r_0$ is an identity type I reduction and $r_1$ is an identity type II reduction, then $r_0(\Gamma) =r_1(\Gamma)$, as Figure \ref{fig:equiv} shows.
\item If $r_0$ is a type II reduction with a $\sigma$-vertex and $r_1$ is an identity type I reduction, then there exists a type IV reduction $b$ with support $\Gamma_0 \cup \Gamma_1$ such that $r_0(\Gamma) = b \circ r_1(\Gamma)$. The same occurs for the symmetric case where $r_0$ is an identity type II reduction and $r_1$ is a type I reduction with $\sigma$-vertices.
\item If $r_0$ is a type II reduction with a $\sigma$-vertex and $r_1$ is a type I reduction with a $\sigma'$-vertex, then there exists a type IV reduction $b$ with support $\Gamma_0 \cup \Gamma_1$ and type III reductions $m_0,\dots,m_n$ such that $b \circ m_{n-1} \circ \dots\circ m_0 \circ r_0(\Gamma) = m_n \circ r_1(\Gamma)$.
\item If $r_0$ and $r_1$ are both type III reductions, there exist two type III reductions $r'_0, r'_1$ such that $r'_0 \circ r_0(\Gamma) =  r'_1\circ r_0(\Gamma)$, as  in Figure \ref{fig:equiv}.
\end{enumerate}
As the process of reduction finishes and the reductions are locally confluent, we have a unique reduced strand $n$-diagram for each equivalence class.\end{proof}

\begin{figure}[h]
\labellist
\pinlabel $\sigma_1$ at 167 77
\pinlabel $\sigma_3\circ\sigma_2$ at 175 67
\pinlabel $\sigma_2\circ\sigma_1$ at 175 30
\pinlabel $\sigma_3$ at 167 20
\pinlabel $\sigma_3\circ\sigma_2\circ\sigma_1$ at 218 52
\pinlabel $r_0$ at 137 75
\pinlabel $r_1$ at 137 20
\pinlabel $r'_0$ at 172 95
\pinlabel $r'_1$ at 172 3
\pinlabel $\sigma_1$ at 104 65
\pinlabel $\sigma_2$ at 104 50
\pinlabel $\sigma_3$ at 104 35
\pinlabel $r_0$ at 55 76
\pinlabel $r_1$ at 55 30
\endlabellist
\centering
\includegraphics[width=0.8\textwidth]{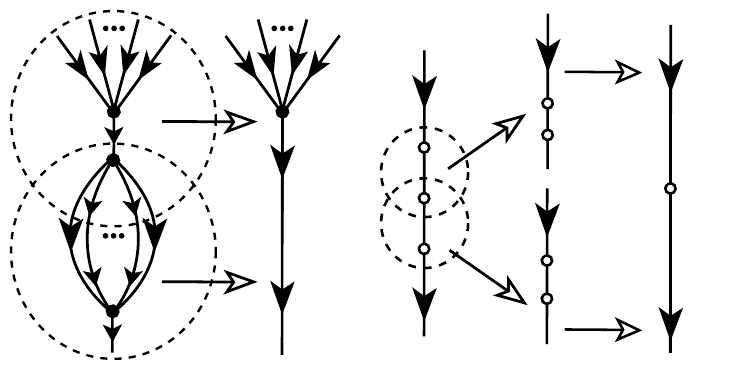}
\caption{The confluences (1) and (4).}
\label{fig:equiv}
\end{figure}

As we have realized on Proposition \ref{prop:equiv}, type III reductions must be considered in order to ensure the local confluence. 

\begin{prop}
There is a bijection between reduced $n$-strand diagrams and reduced elements of $V_n(H)$.
\end{prop}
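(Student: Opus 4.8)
The plan is to exhibit mutually inverse maps between reduced elements of $V_n(H)$ (elements presented by a reduced tree-pair, all of whose labels lie in $H$) and reduced $n$-strand diagrams whose $\sigma$-vertices carry labels in $H$, and then to verify that the two maps are inverse to each other. A minor bookkeeping point to settle at the outset is that, although the general definition permits labels in all of $\Sym(n)$, for the correspondence with $V_n(H)$ we must restrict to $\sigma$-vertices labelled in $H$; since elements of $V_n(H)$ only produce such labels, and since all of the reductions preserve the subgroup generated by the labels, this restriction is harmless.

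First I would define the forward map. Given $g\in V_n(H)$ with reduced tree-pair representative $(\mathcal{T},\mathcal{T}',\tau,\sigma)$, the construction at the start of Section 3 produces an $n$-strand diagram $\Gamma_g$: the tree $\mathcal{T}$ becomes a tree of $n$-splits, the tree $\mathcal{T}'$ becomes a tree of $n$-merges, and the leaves are joined according to $\tau$, placing a $\sigma_i$-vertex on each strand carrying a nontrivial label. I claim $\Gamma_g$ is already reduced: each strand between the two trees carries at most one $\sigma$-vertex, so no type III reduction applies; no $\sigma$-vertex sits on the $0$-th (input) edge of a split nor on the $0$-th (output) edge of a merge, so no type IV reduction applies; all $n$-splits precede all $n$-merges, so no type II reduction applies; and a type I reduction would require a caret of $\mathcal{T}$ to map onto a caret of $\mathcal{T}'$ by a single self-similar bijection, which is precisely a reducible configuration of the tree-pair, excluded since $(\mathcal{T},\mathcal{T}',\tau,\sigma)$ is reduced. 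To see the map is well defined independently of the chosen representative, I would use that any two tree-pair representatives of $g$ admit a common expansion (Lemma \ref{lem:commonexpansion}), together with the observation that an elementary expansion of a tree-pair corresponds exactly to the inverse of a (possibly identity) type I reduction on the associated strand diagram; hence the two strand diagrams are equivalent and, by Proposition \ref{prop:equiv}, have the same reduced form. In particular this simultaneously shows that the reduced tree-pair of $g$ is unique.

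The heart of the argument is the inverse map, recovering a reduced tree-pair from a reduced $n$-strand diagram $\Delta$. The key structural lemma I would establish is that $\Delta$ is in \emph{tree-pair form}, meaning that there is no directed path from an $n$-merge to an $n$-split. If such a path existed, choose a merge $m$ and split $s$ joined by a shortest directed path; minimality forces every interior vertex to be a $\sigma$-vertex, since any interior split would yield a shorter merge-to-split pair and any interior merge likewise. Thus the path is a chain of $\sigma$-vertices from the $0$-th edge of $m$ to the $0$-th edge of $s$: if the chain has at least two $\sigma$-vertices a type III reduction applies, if exactly one a type IV reduction applies, and if none a type II reduction applies, so $\Delta$ would not be reduced, a contradiction. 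The same three reductions also prevent a $\sigma$-vertex on the input edge of a split or the output edge of a merge, and prevent adjacent $\sigma$-vertices. Reading $\Delta$ from the main source one therefore meets a tree of $n$-splits $\mathcal{T}$ (degenerating to a single edge or single $\sigma$-vertex when $\mathcal{T}$ is trivial), because a merge cannot occur before any split, its $n$ inputs being unable to trace back through the single source-edge of an acyclic graph; dually the $n$-merges adjacent to the main sink form a tree $\mathcal{T}'$, and the leaves of $\mathcal{T}$ are joined to those of $\mathcal{T}'$ by strands each carrying at most one $\sigma$-vertex. Since $\Delta$ has equally many $n$-splits and $n$-merges (Remark \ref{rem:samenumber}), the two trees have the same number of leaves, so cutting $\Delta$ along these strands reads off a bijection $\tau$ and a label tuple $\sigma$, hence a tree-pair $(\mathcal{T},\mathcal{T}',\tau,\sigma)$ representing an element of $V_n(H)$; it is reduced because the absence of type I reductions on $\Delta$ is exactly the irreducibility of the tree-pair.

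Finally I would check that the two maps are mutually inverse, which is immediate once tree-pair form is available: building $\Gamma_g$ from a reduced tree-pair and then cutting it returns the same tree-pair, and cutting a reduced $\Delta$ into tree-pair form and rebuilding returns $\Delta$, since $\Delta$ was already in that form. The main obstacle, where essentially all of the work lies, is the structural lemma that every reduced $n$-strand diagram is in tree-pair form; the interplay of type II, III and IV reductions in the shortest-path argument is the delicate point, and it is precisely the reason type IV reductions were introduced.
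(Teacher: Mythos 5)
Your proposal is correct and follows essentially the same route as the paper's proof: build the strand diagram from a reduced tree-pair in one direction, and in the other use the absence of type II, III and IV reductions to show a reduced diagram has all $n$-splits before all $n$-merges with at most one $\sigma$-vertex per connecting strand, then cut along those strands to recover a tree-pair, with type I reducedness of the diagram matching reducedness of the tree-pair. You supply several details the paper leaves implicit (that the forward image is reduced, well-definedness via common expansions, and the mutual-inverse check), and your only deviation is cosmetic: in the shortest merge-to-split path with exactly one interior $\sigma$-vertex, the paper invokes a type II reduction where you invoke type IV, but both apply there, so nothing breaks.
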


\begin{proof}
On the one hand, we always obtain a reduced $n$-strand diagram $\Gamma^g$ from a reduced element $g \in V_n(H)$, as showed in Subsection 3.2. On the other hand, let $\Gamma$ be any reduced $n$-strand diagram. Then every oriented path from the main source to the main sink consists of a finite sequence of vertices which are $n$-splits, $n$-merges or $\sigma$-vertices. We have the following facts, bearing in mind that $\Gamma$ is reduced:
\begin{enumerate}
\item There are no two (or more) consecutive $\sigma$-vertices, otherwise we could perform a type III reduction on $\Gamma$. 
\item There are no $n$-merges followed by an $n$-split, or an $n$-merge followed by a $\sigma$-vertex and then by an $n$-split, otherwise we could perform a type II reduction on $\Gamma$. 
\item There are no $\sigma$-vertices before any $n$-split or after any $n$-merge, otherwise we could perform a type IV reduction on $\Gamma$. 
\end{enumerate}
Therefore, every oriented path of $\Gamma$ consists of a sequence of a finite number of $n$-splits followed by at most one $\sigma$-vertex and the same finite number of $n$-merges. We then `cut' $\Gamma$, cutting everyone of these paths in one edge between the set of $n$-splits and the set of $n$-merges, obtaining a tree-pair representing an element $g \in V_n(H)$. Both trees of $g$ have the same number of leaves by Remark \ref{rem:samenumber}. Finally, $g$ must be reduced: otherwise a type I reduction could be performed on $\Gamma$. 
\end{proof}


\section{The conjugacy problem}

In this section, we prove Theorem \ref{MT}, which will be an immediate consequence of Theorem \ref{TSD} below. Theorem \ref{TSD} and the argument for proving it are adaptations of \cite[Theorem 3.1]{BM} to (closed) $n$-strand diagrams.

\begin{defi}[$(p,q,n)$-strand diagram] A $(p,q,n)$-strand diagram is a finite labelled directed acyclic pitchfork graph with $p$ main sources and $q$ main sinks together with a rotation system $\rho$. The conditions for two $(p,q,n)$-strand diagrams to be equal are analogous as in Definition \ref{def:strand}.
\end{defi}
In the case of $(p,q,n)$-strand diagrams, the rotation system $\rho$ imposes an order from left to right to its main sources and its main sinks, in order to distinguish crossings between them, as Figure \ref{fig:pqstrand} shows.
\begin{figure}[h]
\labellist
\pinlabel $1$ at 28.5 137
\pinlabel $2$ at 74.5 137
\pinlabel $3$ at 121.5 137
\pinlabel $1$ at 29 -3
\pinlabel $3$ at 75 -3
\pinlabel $5$ at 121 -3
\pinlabel $2$ at 56 -3
\pinlabel $4$ at 93.5 -3
\endlabellist
\centering
\includegraphics[width=0.35\textwidth]{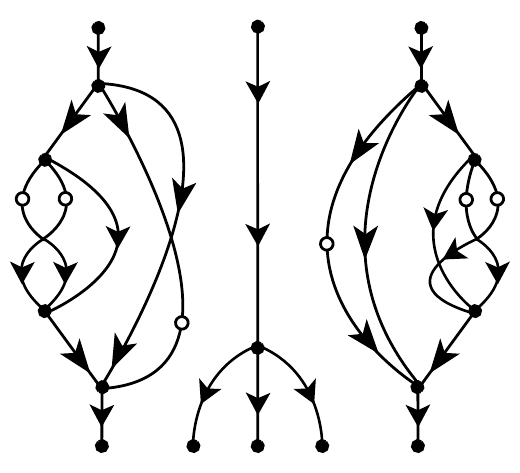}
\caption{An example of $(3,5,3)$-strand diagram.}
\label{fig:pqstrand}
\end{figure}

\begin{defi}[Concatenation]
\label{conc}
The \textit{concatenation} of a $(p,q,n)$-strand diagram with a $(q,r,n)$-strand diagram is a $(p,r,n)$-strand diagram which is obtained by gluing the $i$-th main sink of the former with the $i$-th main source of the latter, and removing the identified vertices.
\end{defi}

It is straightforward to check that set of $(p,q,n)$-strand diagrams is a grupoid with the operation of concatenation. In addition, we can \textit{close} a $(r,r,n)$-strand diagram by identifying every $i$-th main sink with the $i$-th main source. The result is a closed $n$-strand diagram. 

\begin{remark}
All reductions are defined in the same way for $(p,q,n)$-strand diagrams, since they only act on a small piece of a graph. Likewise, Definitions \ref{def:equiv} and \ref{def:red} can be applied also to $(p,q,n)$-strand diagrams, so Proposition \ref{prop:equiv} holds for them.
\end{remark}

If $\Gamma$ is a $(p,q,n)$-strand diagram (resp. a closed $n$-strand diagram), we denote by $[\Gamma]$ the class of all $(p,q,n)$-strand diagrams (resp. closed $n$-strand diagrams) which are equivalent to $\Gamma$. We say that two equivalence classes $[\Gamma], [\Gamma']$ are conjugate if there exist two elements $\Gamma \in [\Gamma]$ and $\Gamma' \in [\Gamma']$ which are conjugate.

\begin{thm}\label{TSD}
Let $\Gamma$ be a $(p,p,n)$-strand diagram, and let $\Gamma'$ be a $(q,q,n)$-strand diagram. Then $[\Gamma]$ and $[\Gamma']$ are conjugate if and only if $\overline{\Gamma}$ and $\overline{\Gamma'}$ are equivalent or differ by a finite number of conjugating transformations.
\end{thm}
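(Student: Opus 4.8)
The plan is to follow the strategy of \cite[Theorem 3.1]{BM}, rephrasing it for the labelled and $n$-ary setting. Recall that, in the groupoid of $(p,q,n)$-strand diagrams under concatenation, saying that the $(p,p,n)$-diagram $\Gamma$ and the $(q,q,n)$-diagram $\Gamma'$ are conjugate means that there is a $(q,p,n)$-strand diagram $\Psi$ with $\Gamma' \sim \Psi \Gamma \Psi^{-1}$, where $\sim$ denotes equivalence in the sense of Definition \ref{def:equiv} and $\Psi^{-1}$ is the inverse of $\Psi$ in the groupoid. I would prove the two implications separately, the forward one being essentially formal and the backward one carrying all the difficulty.

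For the forward implication I would use the fact that closure is invariant under cyclic permutation of concatenation: for a $(q,p,n)$-diagram $A$ and a $(p,q,n)$-diagram $B$, the closed diagrams $\overline{AB}$ and $\overline{BA}$ are obtained by gluing $A$ and $B$ head-to-tail around the annulus and differ only in the radial position of the cut, hence they are equal up to conjugating transformations of the free loops that straddle the cut. Applying this with $A = \Psi$ and $B = \Gamma\Psi^{-1}$ gives that $\overline{\Gamma'} = \overline{\Psi(\Gamma\Psi^{-1})}$ equals, up to conjugating transformations, the diagram $\overline{(\Gamma\Psi^{-1})\Psi} = \overline{\Gamma(\Psi^{-1}\Psi)}$. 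Since $\Psi^{-1}\Psi$ reduces to the identity $(p,p,n)$-diagram, and since reductions performed in the interior of an open diagram induce equivalences of its closure, we get $\overline{\Gamma(\Psi^{-1}\Psi)} \sim \overline{\Gamma}$. Chaining these, $\overline{\Gamma}$ and $\overline{\Gamma'}$ are equivalent up to a finite number of conjugating transformations.

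For the backward implication I would exploit the cutting operation inverse to closure. Because $c(\alpha) > 0$ on every oriented loop, any closed $n$-strand diagram $D$ admits a radial cross-section meeting each strand transversally; cutting $D$ along it produces an open $(r,r,n)$-strand diagram whose closure is $D$, which I call a \emph{cut} of $D$. The two technical facts I would establish are: (i) every type I--IV reduction, every inverse reduction, and every conjugating transformation performed on $\overline{\Gamma}$ can be realized at the level of cuts by first conjugating the open diagram so that the affected subgraph is disjoint from the cross-section, and then performing the corresponding move in the interior; and (ii) any two cuts of the same closed diagram $D$ represent conjugate open diagrams, because two cross-sections differ by an isotopy that slides the cut past $n$-splits, $n$-merges and $\sigma$-vertices, each slide being realized by conjugation by an explicit elementary diagram. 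Assuming $\overline{\Gamma}$ and $\overline{\Gamma'}$ reduce, up to conjugating transformations, to a common reduced closed diagram $D$ (which exists by the analogue of Proposition \ref{prop:equiv} for closed diagrams, the reduced form being unique only up to conjugating transformations), fact (i) shows $\Gamma$ is conjugate to some cut $\Gamma_D$ of $D$ and $\Gamma'$ to some cut $\Gamma_D'$ of $D$, while fact (ii) shows $\Gamma_D$ and $\Gamma_D'$ are conjugate; composing the three conjugacies yields that $\Gamma$ and $\Gamma'$ are conjugate.

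The main obstacle is the backward implication, and within it fact (i) together with the bookkeeping of free loops. The delicate point is that a free loop carries no $n$-split or $n$-merge, so when a cross-section cuts it the only freedom is the cyclic position of the cut among its $\sigma$-vertices; different choices compose the labels $\sigma_1, \dots, \sigma_k$ in different cyclic orders, producing labels that are conjugate in $H$ but generally unequal. This is exactly the ambiguity encoded by a conjugating transformation (Remark \ref{rem:conjugatingtransformation}), so the statement cannot be sharpened to plain equivalence of closed diagrams; verifying that this is the only obstruction to uniqueness, and that it is absorbed by conjugation of cuts, is the crux of the argument.
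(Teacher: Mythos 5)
Your proposal is correct and has the same two-pillar architecture as the paper's proof: your forward implication is the paper's (the closure of a conjugate $\Psi\Gamma\Psi^{-1}$ agrees with $\overline{\Gamma}$ up to conjugating transformations, the ambiguity coming from the cyclic order in which the $\sigma$-labels on free loops get composed, which the paper phrases as the order of type III reductions when $(\Gamma'')^{-1}$ and $\Gamma''$ are glued in $\overline{\Gamma}$), and your facts (i) and (ii) are precisely Propositions \ref{AP1} and \ref{AP2}, combined in exactly the same way through a common reduced closed diagram. The one genuine difference is how you would prove fact (ii): where you slide one cross-section to another past $n$-splits, $n$-merges and $\sigma$-vertices, realizing each elementary slide by conjugation, the paper (following \cite[Proposition 3.3]{BM}) passes to the infinite cyclic cover $\Gamma^{\infty}$, notes that the two cuts lift to two $\mathbb{Z}$-families of cross-sections intertwined by the deck transformation $t$, and extracts the conjugator $\Gamma''$ directly as an intersection of half-covers. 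Your sliding argument does work---since every $(p,q,n)$-strand diagram is invertible in the groupoid, two disjoint cross-sections of the annulus write the two cuts as $AB$ and $BA$, which are conjugate by $[A]$---and it is more hands-on; the cover argument buys freedom from isotopy bookkeeping and handles in one stroke cuts in arbitrary position relative to each other. A minor point in your favor: you state explicitly that the reduced closed form is unique only up to conjugating transformations (the cut position on a free loop composes the labels in different cyclic orders), a subtlety the paper's proof disposes of implicitly via Remark \ref{rem:conjugatingtransformation} before invoking confluence.
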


\begin{proof}[Proof of Theorem \ref{MT}]
It follows from Theorem \ref{TSD} and the bijection between reduced $(1,1,n)$-strand diagrams and reduced tree-pair representatives. Observe that if two $(1,1,n)$-strand diagrams are conjugate in the grupoid, the conjugating element must be another $(1,1,n)$-strand diagram. Therefore, the conjugating element corresponds to another element of some Thompson-like group. \end{proof}

The following two propositions will allow us to prove Theorem \ref{TSD}.

\begin{prop}\label{AP1}
Let $\Gamma$ be a $(p,p,n)$-strand diagram, and let $\overline{\Gamma}_r$ be a closed $n$-strand diagram obtained by applying a reduction to $\overline{\Gamma}$. Then there exists a $(q,q,n)$-strand diagram $\Gamma'$ such that $\overline{\Gamma'} = \overline{\Gamma}_r$ and $[\Gamma]$ and $[\Gamma']$ are conjugate. 
\end{prop}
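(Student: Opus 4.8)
The plan is to track the support of the reduction inside the annulus carrying $\overline{\Gamma}$, relative to the seam $S$ along which the $p$ main sinks of $\Gamma$ were identified with its $p$ main sources. The only structural tool needed is the \emph{cyclic invariance of closure}: if $\Gamma$ factors as a concatenation $\Gamma = XY$ of a $(p,k,n)$-strand diagram $X$ with a $(k,p,n)$-strand diagram $Y$, then $\overline{XY} = \overline{YX}$, since both are obtained by gluing the free sources of $X$ to the free sinks of $Y$ and the free sinks of $X$ to the free sources of $Y$. As $YX = X^{-1}(XY)X = X^{-1}\Gamma X$ in the grupoid, the $(k,k,n)$-strand diagram $YX$ is conjugate to $\Gamma$, so re-cutting the annulus along a different meridian amounts precisely to replacing $\Gamma$ by a conjugate.

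First I would treat the \emph{local} reductions, namely types I, II and III together with the type IV reduction supported on an $n$-merge (or $n$-split) adjacent to a $\sigma$-vertex. In each of these the support is the $\varepsilon$-neighbourhood of a contractible subgraph $\Gamma_0$ occupying a bounded region of the annulus. If $\Gamma_0$ is disjoint from the seam $S$, the reduction is already supported in the interior of $\Gamma$; performing it there yields a $(p,p,n)$-strand diagram $\Gamma_r$ with $\overline{\Gamma_r} = \overline{\Gamma}_r$ and $[\Gamma] = [\Gamma_r]$, and we are done with $q=p$. If $\Gamma_0$ meets $S$, then, being contractible, it cannot cross \emph{every} meridian of the annulus; I would choose a horizontal cut $L$ through the interior of $\Gamma$ avoiding $\Gamma_0$, giving a factorisation $\Gamma = XY$ with $L$ lying between $X$ and $Y$. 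In $\overline{YX}$ the old seam $S$ becomes an interior junction while the new seam is $L$, so that $\Gamma_0$ now sits in the interior; performing the reduction on $YX$ produces the desired $(q,q,n)$-strand diagram $\Gamma' = (YX)_r$, which satisfies $\overline{\Gamma'} = \overline{\Gamma}_r$ and is conjugate to $\Gamma$ via $X$.

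The remaining, and genuinely delicate, case is the type IV reduction performed on a \emph{free loop}. Here the support is not contractible: a free loop consists only of $\sigma$-vertices, hence is an isolated cyclic component $K$ of $\overline{\Gamma}$ of positive winding number, so it meets every meridian and cannot be pushed off the seam. The plan is instead to cut $S$ along $K$ itself. Since $K$ is a component, the strands of $\Gamma$ that close up to $K$ use the same index set $I$ at top and bottom and assemble into a $(|I|,|I|,n)$-sub-diagram $\Gamma_K$ with $\overline{\Gamma_K}=K$, disjoint from the rest of $\Gamma$. Cutting the annulus at a point of $K$ away from its vertices then exhibits the free-loop reduction as an ordinary reduction of $\Gamma_K$, extended by the identity on the complementary strands; re-assembling with the untouched part and invoking cyclic invariance again produces a conjugate $(q,q,n)$-strand diagram $\Gamma'$ with $\overline{\Gamma'} = \overline{\Gamma}_r$.

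I expect this last case to be the main obstacle, for two reasons. One must verify that a free loop is always an entire connected component arising from a self-indexed block of strands, and one must check that cutting through such an essential loop really converts the intrinsically ``closed'' free-loop reduction---whose very definition passes through the $0$-th edge shared by an $n$-split and an $n$-merge---into a legitimate reduction at the level of $(q,q,n)$-strand diagrams. Controlling the rotation system $\rho$ and the winding numbers across this cut, so that $\overline{\Gamma'}$ equals $\overline{\Gamma}_r$ on the nose rather than merely up to equivalence, is where the bookkeeping will be heaviest.
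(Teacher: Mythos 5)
Your structural tool is sound and is in fact the conceptual content of the paper's proof: the paper's case-by-case conjugators $\Gamma''$ (a copy of the half $\Gamma_1$ of the support lying on the source side of the seam in case I.a, a source followed by an $n$-split in II.a, a source--$\sigma_2$-vertex--split in III, and so on) implement exactly your recutting $XY \mapsto YX$, and your treatment of the seam-straddling type II, III and merge-adjacent type IV reductions matches the paper's in substance while being cleaner. One soft spot in the local case: the support of a type I reduction is \emph{not} contractible --- it consists of an $n$-split, an $n$-merge and $n$ connecting strands, a graph of first Betti number $n-1$ --- so the existence of a meridian avoiding it requires the additional argument that all its cycles have zero winding in the annulus, i.e.\ that all $n$ strands cross the seam the same number of times. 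If one strand crosses the seam and another does not, the support contains a cycle of winding $1$, hence meets \emph{every} meridian, and no recut can make the reduction interior; your argument is silent here, whereas the paper's explicit $\Gamma_1/\Gamma_2$ description at least pins down the configurations it treats.

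The genuine gap is the free-loop case, and it is a wrong step, not just heavy bookkeeping. The free-loop type IV reduction replaces the configuration $\Gamma_2$ --- a disjoint union of free loops carrying the $n$ $\sigma$-vertices, one loop per cycle of $\sigma$, arising for instance as the closure of $n$ parallel strands, each bearing a $\sigma$-vertex, glued according to $\sigma$ --- by the single free loop $\Gamma_0$ with one $\sigma$-vertex. Both sides are essential and cross every meridian a \emph{different} total number of times ($n$ versus $1$ in this example). Consequently, cutting the loops at points, as you propose, produces open diagrams with different numbers of sources and sinks, and since every reduction of open diagrams preserves $(p,q)$, the ``ordinary reduction of $\Gamma_K$ extended by the identity'' you invoke does not exist; nor can cyclic invariance repair this, because after any recut the move would still have to be performed in the interior. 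This is precisely the point where the paper abandons recutting: in its case IV.b it conjugates by a genuinely new element, namely $n$ main sources followed by an $n$-merge, so that the conjugated diagram becomes the type I configuration (split, $\sigma$-strands, merge), which reduces to a single $\sigma$-vertex strand with $q=1\neq p$. Some conjugator of this kind --- one that is not a slice of $\Gamma$ --- is unavoidable, and your proposal has no mechanism to produce it.
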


Observe that Proposition \ref{AP1} can be used multiple times: if we apply another reduction $ \overline{\Gamma}_r \rightarrow \overline{\Gamma}_{r^2}$, the new $\Gamma'_r$ is obtained by conjugating $\Gamma'$.

\begin{proof}
If the reduction performed on $\overline{\Gamma}$ can be also performed on $\Gamma$, then $\Gamma' = \Gamma$. If not, we have several cases, depending on the type of reduction performed and whether the reduction involves $n$ main sinks and sources or more. Since the arguments used in this proof are essentially the same in all cases, we will explain in detail only the first one. We encourage the reader to keep in mind Figures \ref{fig:contexpproof} and \ref{fig:merbridproof} as help. 

\textbf{Type I reduction.}

\textbf{I.a} It occurs when $n$ main sinks are identified with $n$ main sources in $\Gamma$, as there are two subgraphs $\Gamma_1$, $\Gamma_2 \subset \Gamma$ whose concatenation produces a type I reduction. On the one hand, $\Gamma_1$ is composed by an $\varepsilon$-neighbourhood of a subgraph between $n$ main sources and one $n$-merge, such that the $\sigma(i)$-th edge of the $n$-merge has the $i$-th or $\sigma(i)$-th main source as endpoint, or has a $\sigma$-vertex as endpoint whose $0$-th edge has the $i$-th or $\sigma(i)$-th main source as endpoint, $\forall i \in \{1, \dots, n\}$. On the other hand, $\Gamma_2$ is composed by an $n$-split and $n$ main sinks in which occurs the same as in $\Gamma_1$. We have the following symmetry conditions:
\begin{enumerate}
\item If the $\sigma(i)$-th edge of the $n$-merge has the $i$-th main source as endpoint in $\Gamma_1$, then the $i$-th edge of the $n$-split has the $i$-th main sink as endpoint in $\Gamma_2$.
\item If there is a $\sigma$-vertex between the $i$-th main source and the $n$-merge in $\Gamma_1$, then there are no $\sigma$-vertices between the $n$-split and the $i$-th main sink in $\Gamma_2$.
\end{enumerate}
In fact, (1) and (2) hold exchanging $n$-merges by $n$-splits and main sources by main sinks. We conjugate $\Gamma$ by an element $\Gamma''$ which is a copy of $\Gamma_1$, so the concatenation $\Gamma_2 \Gamma''$ produces the same type I reduction as in $\Gamma$. Besides, $(\Gamma'')^{-1}\Gamma_1$ produces an identity type I reduction.

\textbf{I.b} If there are more than $n$ main sources and $n$ main sinks glued together in the type I reduction, we can do the same as in the case I.a multiple times, one for every set of $n$ main sources, as Figure \ref{fig:contexpproof} shows.

\textbf{Type II reduction.}

\textbf{II.a} The reduction occurs when $\Gamma$ ends with an $n$-merge followed by a main sink. On the other hand $\Gamma$ starts with a main source followed by a $\sigma$-vertex followed by an $n$-split. In this case we conjugate $\Gamma$ by an element $\Gamma''$ which has a main source followed by an $n$-split.

\textbf{II.b} There are more than one main sources and main sinks involved in the reduction and one $\sigma$-vertex after one of the main sources. We conjugate $\Gamma$ by an element which glues an $n$-split to every main sink involved.

\textbf{Type III reduction.}

\textbf{III} In this case $\Gamma$ ends with a $\sigma_1$-vertex followed by a main sink, and starts with a main source followed by a $\sigma_2$-vertex. We conjugate $\Gamma$ by an element which has a main source followed by a $\sigma_2$-vertex followed by an $n$-split. 

\textbf{Type IV reduction.}

\textbf{IV.a} If it involves only $n$ main sources and $n$ main sinks and the closure of $\Gamma$ does not produce free loops, we do the same process as in the case I.a.

\textbf{IV.b} If it involves only $n$ main sources and $n$ main sinks and the closure of $\Gamma$ produces free loops, we conjugate $\Gamma$ by an element which has $n$ main sources followed by an $n$-merge.

\textbf{IV.c} If the type IV reduction involves more than $n$ main sources and $n$ main sinks we proceed as in the case I.b. \end{proof}
\begin{figure}[h]
\labellist
\pinlabel \textbf{I.a} at 35 0
\pinlabel \textbf{I.b} at 125 0
\pinlabel \textbf{II.a} at 215 0
\pinlabel \textbf{II.b} at 300 0
\pinlabel $\sigma^{-1}$ at 365 156
\pinlabel $\sigma$ at 360 127
\pinlabel $\sigma$ at 360 69
\pinlabel $\sigma$ at 360 46
\pinlabel $\Gamma$ at -10 101
\pinlabel $\Gamma_1$ at 20 115
\pinlabel $\Gamma_2$ at 20 89
\pinlabel $\Gamma_1$ at 20 27
\pinlabel $\Gamma^{-1}_1$ at 20 170

\pinlabel $\Gamma_1$ at 170 115
\pinlabel $\Gamma_2$ at 85 89
\pinlabel $\Gamma^{-1}_2$ at 87 27
\pinlabel $\Gamma_1$ at 170 27
\pinlabel $\Gamma^{-1}_1$ at 171 170
\pinlabel $\Gamma_2$ at 85 170

\pinlabel $\Gamma''$ at -10 27
\pinlabel $(\Gamma'')^{-1}$ at -15 170
\endlabellist
\centering
\includegraphics[width=0.85\textwidth]{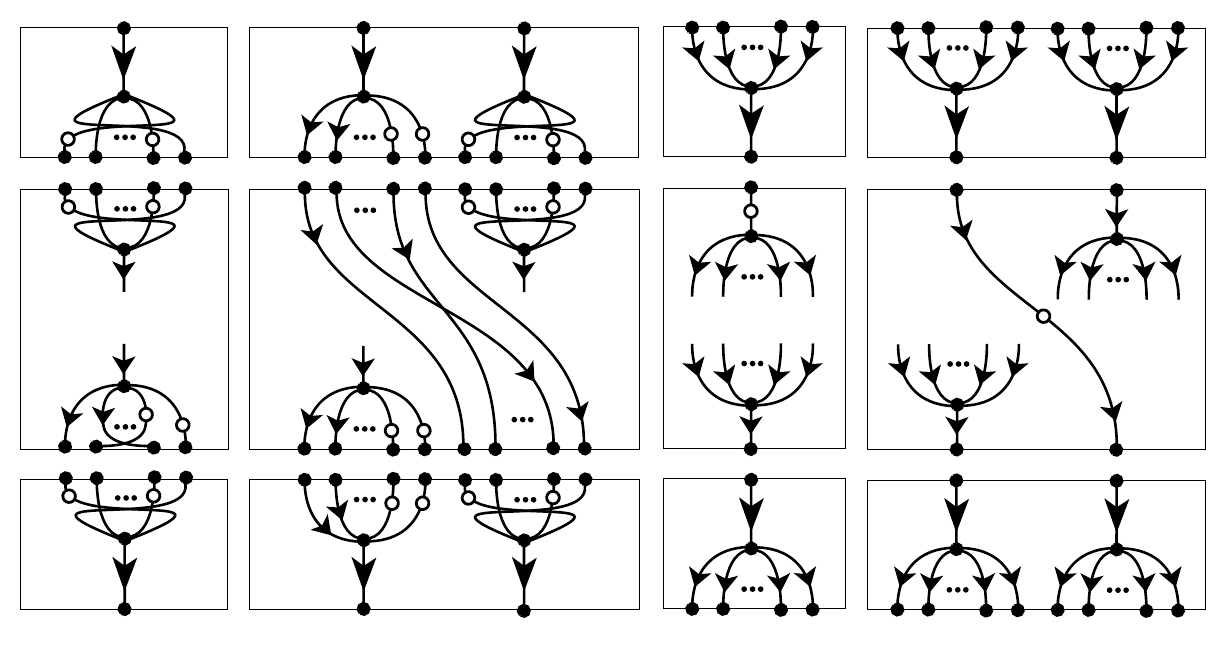}
\caption{Type I and II reductions.}
\label{fig:contexpproof}
\end{figure}

\begin{figure}[h]
\labellist
\pinlabel \textbf{III} at 33 0
\pinlabel \textbf{IV.a} at 90 0
\pinlabel \textbf{IV.b} at 148 0
\pinlabel \textbf{IV.c} at 255 0
\pinlabel $\Gamma$ at -10 97
\pinlabel $\Gamma''$ at -10 33
\pinlabel $(\Gamma'')^{-1}$ at -15 163
\pinlabel $\sigma_2^{-1}$ at 46 157
\pinlabel $\sigma_2$ at 45 127
\pinlabel $\sigma_1$ at 45 70
\pinlabel $\sigma_2$ at 45 43
\pinlabel $\sigma$ at 335 135
\pinlabel $\sigma$ at 335 72
\pinlabel $\sigma^{-1}$ at 340 156
\pinlabel $\sigma$ at 335 50
\endlabellist
\centering
\includegraphics[width=0.85\textwidth]{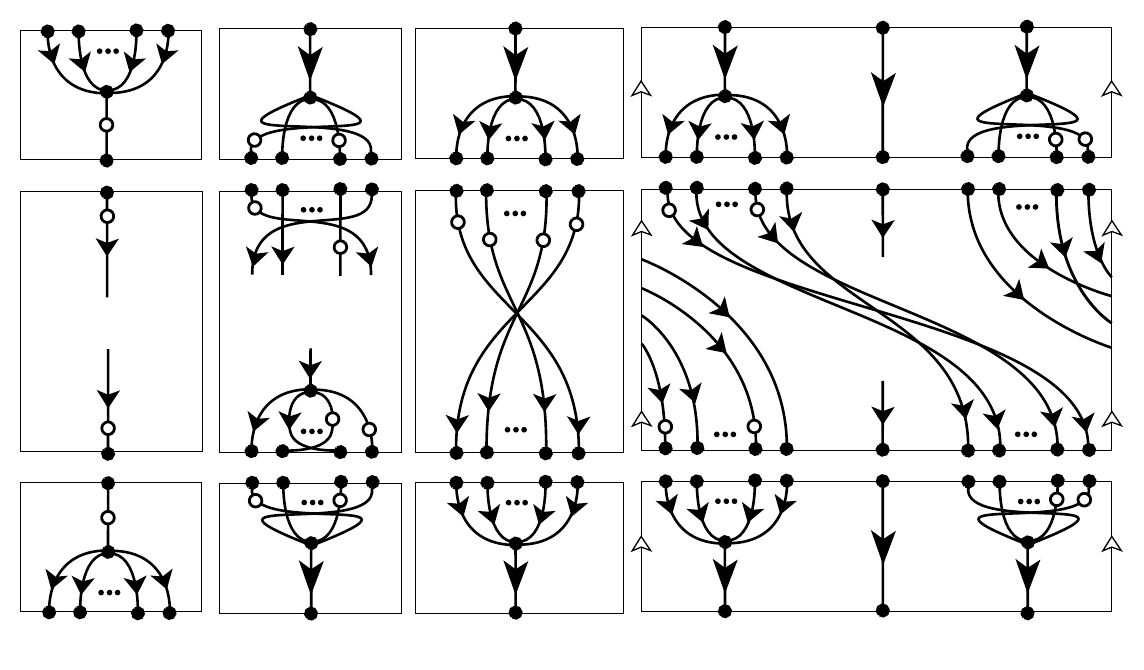}
\caption{Type III and IV reductions.}
\label{fig:merbridproof}
\end{figure} 

\begin{prop}\label{AP2}
Let $\Gamma$ be a $(p,p,n)$-strand diagram, and let $\Gamma'$ be a $(q,q,n)$-strand diagram. If $\overline{\Gamma} = \overline{\Gamma'}$, then $[\Gamma]$ and $[\Gamma']$ are conjugate.
\end{prop}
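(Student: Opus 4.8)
The plan is to exploit the annular incarnation of the closed diagram and to regard the open diagrams $\Gamma$ and $\Gamma'$ as two different \emph{cuts} of the common closed diagram $\Delta:=\overline{\Gamma}=\overline{\Gamma'}$, then to show that any two admissible cuts are related by a finite sequence of conjugations. Concretely, because $c(\alpha)>0$ for every oriented loop $\alpha\subset\Delta$, the winding datum defines a circle-valued height function $h\colon\Delta\to\mathbb{R}/\mathbb{Z}$ that strictly increases along the flow on each edge; after a generic perturbation I may assume the finitely many vertices of $\Delta$ lie at distinct heights. This is exactly the immersion of $\Delta$ in an annulus in which $h$ is the angular coordinate. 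For a regular value $\theta$ the level set $h^{-1}(\theta)$ is a system of disjoint essential transversal arcs (one arc in each component of $\Delta$) meeting each edge it crosses positively; cutting $\Delta$ along it reopens the annulus into a disc and produces an acyclic pitchfork open $n$-strand diagram $\Gamma_\theta$, whose main sources and main sinks are precisely the crossing points of $h^{-1}(\theta)$ with $\Delta$.

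First I would check that $\Gamma$ and $\Gamma'$ themselves occur as such level cuts. The gluing data used to form $\overline{\Gamma}$ (resp.\ $\overline{\Gamma'}$) marks an admissible cut of $\Delta$; using the isomorphism $\phi$ witnessing $\overline{\Gamma}=\overline{\Gamma'}$ I transport the cut of $\Gamma'$ onto $\overline{\Gamma}$, so that both cuts live on the same immersed diagram. A straightening isotopy, pushing a positive transversal onto a nearby level set without crossing any vertex, then identifies $\Gamma$ with $\Gamma_{\theta_0}$ and $\Gamma'$ with $\Gamma_{\theta_1}$ as open diagrams, for suitable regular values $\theta_0,\theta_1$. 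Hence it suffices to prove that all the $\Gamma_\theta$ lie in a single conjugacy class.

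Next I would let $\theta$ sweep once around $\mathbb{R}/\mathbb{Z}$ from $\theta_0$ to $\theta_1$. The open diagram $\Gamma_\theta$ is unchanged while $\theta$ stays strictly between consecutive vertex heights, and it changes by one elementary slide each time $\theta$ crosses the height of a single vertex $v$. I would then verify that each slide is realized by a conjugation in the groupoid: passing a $\sigma$-vertex leaves the strand count fixed and conjugates the open diagram by the elementary diagram carrying $\sigma$ on one strand and the identity elsewhere; passing an $n$-split or an $n$-merge changes the strand count by $n-1$ and conjugates by the corresponding elementary split/merge diagram (acting on the relevant $n$ strands, identity elsewhere), up to a type I or type II reduction that absorbs the created split--merge pair. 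Composing these finitely many conjugations along the sweep shows that $\Gamma=\Gamma_{\theta_0}$ and $\Gamma'=\Gamma_{\theta_1}$ are conjugate, so $[\Gamma]$ and $[\Gamma']$ are conjugate.

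The main obstacle is the bookkeeping around the split/merge slides, since this is exactly where the strand counts $p$ and $q$ are reconciled: I must check that the elementary split/merge diagram is a legitimate groupoid element of the appropriate source/sink type (changing the number of strands by $n-1$), that conjugating by it and then applying the appropriate reduction reproduces $\Gamma_\theta$ on the far side of $v$, and that the intermediate object is still a valid acyclic pitchfork $n$-strand diagram. A secondary technical point is justifying the two isotopy claims, namely the existence of a generic circle-valued height $h$ and the fact that an arbitrary admissible cut straightens to a level set without crossing a vertex; both amount to a general-position argument in the annulus, and the degenerate case of a finger move that momentarily creates a cancelling pair of crossings is harmless, being absorbed by the equivalence relation. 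Free loops require no special treatment here, since $\overline{\Gamma}=\overline{\Gamma'}$ is an honest equality and no conjugating transformation is needed at this stage.
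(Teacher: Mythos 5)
Your sweep mechanism is fine as far as it goes: level cuts of one fixed circle-valued height function $h$ are indeed all conjugate, since crossing a single vertex height is realized by conjugating by an elementary diagram (with the created split--merge pair absorbed by a type I or II reduction). The genuine gap is the ``straightening isotopy'' step, where you claim that the cut corresponding to $\Gamma'$ can be pushed onto a level set of the \emph{same} height function $h$ adapted to $\Gamma$, without crossing vertices. General position gives you that each transversal is a level set of \emph{some} height function, but two positive transversals of the same closed diagram need not be simultaneous level sets of a common $h$: their lifts to the infinite cyclic cover can interleave, i.e.\ no translate of one cut fits inside a single fundamental domain of the other. Level sets of a single $h$ never interleave, so your sweep from $\theta_0$ to $\theta_1$ simply cannot reach such a cut, and the proof as written does not cover this case.

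Here is a concrete obstruction with $n=2$. Let $\Delta$ have one $2$-split $s$ and one $2$-merge $m$, with edges $e_1$ from an output of $s$ back to the input of $s$, $f$ from the other output of $s$ to an input of $m$, and $e_2$ from the output of $m$ back to the other input of $m$; the only oriented loops are $e_1$ and $e_2$, each of winding $1$, and $\Delta$ is the closure of a $(2,2,2)$-strand diagram. Take $C_0=\{p_1\in e_1,\ p_2\in e_2\}$ and $C_1=\{p_1'\in e_1,\ p_2'\in e_2\}$ with $p_1$ before $p_1'$ along $e_1$ but $p_2'$ before $p_2$ along $e_2$. If both were regular levels $h^{-1}(\theta_0)$, $h^{-1}(\theta_1)$ of one $h$ increasing along every edge, then reading $e_1$ forces the cyclic order $\bigl(h(s),\theta_0,\theta_1\bigr)$, reading $e_2$ forces $\bigl(h(m),\theta_1,\theta_0\bigr)$, and $f$ must run from $h(s)$ to $h(m)$ crossing neither value --- which is impossible, as the forward arc from $h(s)$ to $h(m)$ necessarily passes $\theta_0$. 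Yet $\overline{\Gamma_{C_0}}=\overline{\Gamma_{C_1}}=\Delta$, so the proposition applies and the two cut-open diagrams \emph{are} conjugate. The repair is exactly where the paper goes: pass to the infinite concatenation ${\Gamma}^{\infty}$, which is the common cyclic cover, and build the conjugator in one step as $\Gamma''=\bigl(\bigcup_{k\geq 0}\Gamma_k\bigr)\cap\bigl(\bigcup_{k<N}\Gamma'_k\bigr)$ for $N$ large, giving $\Gamma\Gamma''=\Gamma''\Gamma'$; equivalently, you could salvage your argument by replacing the single global $h$ with an induction that pushes one vertex at a time across the cut, using the cover (not an ambient height function) to organize which vertices lie between the two cuts. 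Your per-vertex conjugation moves would then serve as the inductive step, but without this the interleaving case is unproved.
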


The following proof is taken word for word of \cite[Proposition 3.3]{BM}, included here for completeness:
\begin{proof}
Let ${\Gamma}^{\infty}$ be the infinite $n$-strand diagram obtained when we concatenate infinitely many times $\Gamma$ with a copy of itself. Thus ${\Gamma}^{\infty}$ is an infinite directed graph with a rotation system. In addition, ${\Gamma}^{\infty}$ is a cyclic cover of $\overline{\Gamma}$. Since $\overline{\Gamma} = \overline{\Gamma'}$, then ${\Gamma}^{\infty}$ and ${\Gamma'}^{\infty}$ must be equal. Therefore, ${\Gamma}^{\infty} = \bigcup_{k \in \mathbb{Z}} \Gamma_k = \bigcup_{k \in \mathbb{Z}} \Gamma'_k$, where $\Gamma_k = \Gamma$ and $\Gamma'_k = \Gamma'$ $\forall k $, so there exists a deck transformation $t$ such that $t(\Gamma_k)=\Gamma_{k+1}$ and $t(\Gamma'_k)=\Gamma'_{k+1}$ $\forall k$. Let:
$$ \displaystyle \Gamma''_l = \bigg( \bigcup_{k \geq 0}\Gamma_k \bigg) \cap \bigg( \bigcup_{k < N} \Gamma'_k \bigg) \ \mbox{and} \ \Gamma''_r = \bigg( \bigcup_{k > 0}\Gamma_k \bigg) \cap \bigg( \bigcup_{k \leq N} \Gamma'_k \bigg).$$
It follows that $\Gamma''_l$ and $\Gamma''_r$ are $n$-strand diagrams, with $t(\Gamma''_l) = \Gamma''_r$, so they are equal to the same $n$-strand diagram $\Gamma''$. Since $\Gamma_0 \cup \Gamma''_r = \Gamma''_l \cup \Gamma'_N$, we have $\Gamma\Gamma'' = \Gamma''\Gamma'$.
\end{proof}
Now, we are ready to prove Theorem \ref{TSD}:

\begin{proof}[Proof of Theorem \ref{TSD}]
On the one hand, if $\Gamma$ and $\Gamma'$ are conjugate, then $\Gamma = (\Gamma'')^{-1}\Gamma'\Gamma''$ for some $(p,q,n)$-strand diagram $\Gamma''$. Then the closures of both $\Gamma = (\Gamma'')^{-1}\Gamma'\Gamma''$ and $\Gamma'$ are equal or they differ by some conjugating transformations, depending on the order in which the type III reductions are performed when $(\Gamma'')^{-1}$ and $\Gamma''$ are glued together in $\overline{\Gamma}$.

On the other hand, suppose that the closures of $\Gamma$ and $\Gamma'$ are equivalent or they differ by some conjugating transformations. By Remark, \ref{rem:conjugatingtransformation}, we can perform conjugating transformations without changing the conjugacy class of both $\Gamma$ and $\Gamma'$. So, suppose that $\Gamma$ and $\Gamma'$ are equivalent. It means that there exists an element $\Gamma''$ which is the reduction of both. By Proposition \ref{AP1}, there exist $\Gamma_1$ and $\Gamma_2$ such that $\Gamma'' =\overline{\Gamma_1} = \overline{\Gamma_2}$ and $[\Gamma_1]$, $[\Gamma_2]$ are conjugate to $[\Gamma]$ and $[\Gamma']$ respectively. Finally by Proposition \ref{AP2}, $[\Gamma_1]$ and $[\Gamma_2]$ are conjugate, so $[\Gamma]$ and $[\Gamma']$ are conjugate too.
\end{proof}


\section{A non-isomorphism result}

In this section, we give an application of Theorem \ref{MT}, which generalizes a result of Higman \cite{HI}. We start by proving the following:

\begin{prop}\label{redclos}
Let $v \in V_n(P)$ be an element of prime order such that $\ord(v)$ does not divide $\ord(P)$. Let $\Gamma$ be its associated $n$-strand diagram. Then the reduced closure of $\Gamma$ has no $\sigma$-vertices.
\end{prop}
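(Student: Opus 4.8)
The plan is to extract $\ord(v)$ from the combinatorics of the reduced closure of $\Gamma$ and then invoke the arithmetic of prime orders. Write $p=\ord(v)$ and let $D$ denote the reduced closure of $\Gamma$. First I would show that, because $v$ has finite order, every connected component of $D$ is a \emph{free loop}: it contains no $n$-splits and no $n$-merges. This is the $V_n(P)$-analogue of Belk--Matucci's characterization of torsion \cite{BM}; heuristically, an $n$-split surviving in $D$ records a branch that is properly contained in its image under some power of $v$, producing an attracting germ and hence forcing $v$ to have infinite order. Granting this, every $\sigma$-vertex of $D$ lies on some free loop, and after type III reductions each free loop $L_j$ carries a single $\sigma$-vertex labelled by a product $\tau_j\in P$ of its original labels, taken in cyclic order; by Remark \ref{rem:conjugatingtransformation} the conjugacy class of $\tau_j$, and in particular its order $d_j=\ord(\tau_j)$, is a well-defined invariant of $L_j$.

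The second step is to compute the order of $v$ from this data. Each free loop $L_j$ has a winding number $w_j=c(L_j)>0$, which I read as the period of the associated family of periodic points of $v$: travelling once around $L_j$ winds $w_j$ times around the central hole, i.e.\ corresponds to $w_j$ applications of $v$. The first return map $v^{\,w_j}$ then fixes such a point and acts on its germ as $\tau_j$, of order $d_j$. Modelling the invariant set as $\mathbb{Z}/w_j$ together with the germ twist $\tau_j$ applied once per traversal, one checks that the restriction of $v$ has order exactly $w_j d_j$; taking the join over all components gives
\begin{equation*}
\ord(v)=\operatorname{lcm}_j\,(w_j d_j).
\end{equation*}

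The final step is purely arithmetic. Since $\ord(v)=p$ is prime, each $w_j d_j$ divides $p$, so $w_j d_j\in\{1,p\}$. Suppose, for contradiction, that $D$ has a $\sigma$-vertex; then some $\tau_j\neq Id$, so $d_j>1$ and therefore $w_j d_j=p$, which forces $d_j=p$ because $p$ is prime. But $d_j=\ord(\tau_j)$ divides $\ord(P)$ by Lagrange's theorem, whence $p\mid\ord(P)$, contradicting the hypothesis that $\ord(v)=p$ does not divide $\ord(P)$. Hence every $\tau_j=Id$ and $D$ has no $\sigma$-vertices. The main obstacle is the first step together with the order formula: making rigorous, inside the strand-diagram framework, that finiteness of order forces all components to be free loops and that such a loop contributes exactly $w_j d_j$ to the order; once this structural input is in place the remainder is routine.
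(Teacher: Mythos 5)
Your argument is correct and is essentially the paper's own proof recast in closed-diagram language: your free loops of winding number $w_j$ with twist $\tau_j$ are exactly the paper's orbits of leaves (which have size $1$ or $p$ by primality), and your case analysis $w_jd_j\in\{1,p\}$ --- ruling out $d_j=p$ via Lagrange and the hypothesis $p\nmid\ord(P)$, and noting that $w_j=p$ forces trivial twist since the first return is $v^p=Id$ --- reproduces the paper's two cases verbatim. The structural step you grant (finite order forces the reduced closure to be a union of free loops) is likewise taken for granted in the paper, where it appears implicitly in the assertion that $v$ permutes the leaf set $\mathcal{B}$ so that ``orbits of $v$ inside $\mathcal{B}$'' make sense; your proposal is thus no less complete than the printed proof and correctly identifies where the remaining rigor is needed.
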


In particular, this result says that such an element $v \in V_n(P)$ is conjugate to an element of $V_{n}(Id) = V_n$.

\begin{proof}
Let $v \sim (\mathcal{T}_\mathcal{B},\mathcal{T}_{v(\mathcal{B})}, \tau, \sigma)$ be reduced. We consider the sets of leaves $\{v^k(B)\}_{0 \leq k \leq p}$ $\forall B \in \mathcal{B}$, that is, the set of orbits of $v$ inside $\mathcal{B}$. Since $v$ is an element of prime order $p$, every orbit contains exactly one leaf or $p$ different leaves. In the first case, the orbit corresponds in $\Gamma$ to a free loop which contains a number $k \leq p$ of $\sigma_i$-vertices such that $\sigma_k \circ \dots \circ \sigma_1 = Id$. By performing type III and IV reductions we obtain a free loop with no $\sigma$-vertices. In the second case $v$ acts as the identity on the leaf, since $\ord(\sigma) \vert \ord(P)$ $\forall \sigma \in P$. Therefore, the closure of $[\Gamma]$ consists only of free loops without $\sigma$-vertices. 
\end{proof}

In order to prove Theorem \ref{thm:nonisomorphism} we need to study the conjugacy classes of isomorphisms of $H$ into $V_n(P)$ where $H$ is a finite group, as done in \cite{HI}.

\begin{defi}[$H$-space]
Given a group $H$, an \textit{$H$-space} is a set on which $H$ acts. We say that the $H$-space is \textit{transitive} if the action is transitive.
\end{defi}

\begin{lemma}\cite[Lemma 6.1]{HI}\label{congruence} Let $S_1,\dots,S_t$  be a set of representatives of the isomorphism classes of transitive $H$-spaces. Then the conjugacy classes of $H$ into $V_{n}(Id)$ are in bijective correspondence with the solutions of $$n_1\vert S_1\vert + \dots + n_t \vert S_t \vert \equiv^* 1 \mod n-1,$$ where $a \equiv^* b \mod n-1$ means that $a \equiv b \mod n-1$ and $a = 0 \iff b = 0$.
\end{lemma}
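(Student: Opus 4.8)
The plan is to translate the statement into the combinatorics of finite $H$-sets, exactly as Higman does in \cite{HI}, but phrased in the antichain/tree language of Section \ref{S2}. A homomorphism $\phi\colon H\to V_n(Id)$ is the same datum as an action of $H$ on $C_n$ in which every $\phi(h)$ acts by prefix replacement (no labels, since the label group is trivial in $V_n(Id)$), and two homomorphisms are conjugate precisely when the corresponding actions are intertwined by an element of $V_n(Id)$. First I would produce an $H$-invariant finite complete set of pairwise independent branches: starting from a common expansion of the (finitely many) domain and range trees of a generating set of $H$ and applying Lemma \ref{lem:commonexpansion} repeatedly, one obtains a finite complete antichain $\mathcal{B}$ with $\phi(h)(\mathcal{B})=\mathcal{B}$ for all $h$. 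The action of $H$ then restricts to a permutation action on the leaves of $\mathcal{T}_{\mathcal{B}}$, that is, a finite $H$-set $X$, which decomposes into transitive orbits as $X\cong\bigsqcup_i n_i S_i$.

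Next I would pin down the conjugacy invariant. Refining $\mathcal{B}$ by an elementary expansion, or coarsening it by an elementary reduction (legal in $V_n(Id)$ only when it creates no nontrivial label, which is the generic situation here), does not change $\phi$; these moves generate a \emph{blow-up} equivalence on the associated $H$-sets. Conjugate homomorphisms yield blow-up-equivalent $H$-sets, and conversely I would show that each equivalence class contains a unique reduced (minimal) invariant antichain, in the spirit of Proposition \ref{prop:equiv}, so that its $H$-set is a complete conjugacy invariant. The effect of a single expansion on the multiplicities is computed orbit by orbit: expanding an orbit of type $S_i$ replaces it, according to the induced (and forced) action of the point-stabilizer on the $n$ new children, by a disjoint union of transitive $H$-sets of total size $n\lvert S_i\rvert$, so that the total number of leaves always changes by a multiple of $n-1$.

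Finally I would extract the congruence. Every finite complete antichain of $C_n$ has a number of leaves congruent to $1$ modulo $n-1$ and is nonempty: the root alone is one leaf, and every elementary expansion adds exactly $n-1$ leaves. Hence $\sum_i n_i\lvert S_i\rvert\equiv^{*}1\pmod{n-1}$, the starred congruence recording nonemptiness, which only has content when $n=2$ (i.e. modulo $1$). Conversely, for any tuple $(n_i)$ satisfying this congruence one builds an antichain with the prescribed number of leaves and equips it with the $H$-set $\bigsqcup_i n_i S_i$ via permutations with trivial tails, realizing a homomorphism into $V_n(Id)$; matching blow-up classes with solutions of the congruence gives the asserted correspondence. \textbf{The main obstacle} is the bookkeeping of the second paragraph: proving that the reduced invariant antichain (hence the reduced $H$-set) is unique and a complete conjugacy invariant, and that the blow-up equivalence on $H$-sets is detected exactly by the leaf-count modulo $n-1$. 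This is the technical heart, and for it I would follow \cite{HI} closely, importing the uniqueness-of-reduced-form machinery already developed for (closed) $n$-strand diagrams in Section 3; I note that Proposition \ref{redclos} is essentially the special case in which $H$ is cyclic of prime order, where the reduced closed diagram consists only of free loops.
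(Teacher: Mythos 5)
Your proposal tracks the right strategy: the paper itself gives no proof of Lemma \ref{congruence} (it is quoted from Higman), and Higman's actual argument is essentially the one you outline --- an invariant complete antichain obtained from finiteness of $H$ via Lemma \ref{lem:commonexpansion}, the induced finite $H$-set of leaves, a blow-up equivalence generated by expansions, leaf count $\equiv 1 \bmod (n-1)$, and realizability of any solution tuple. Your forward congruence and your realization step are fine.

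However, there is a genuine gap, and it sits exactly where you place your ``main obstacle'', which you moreover misstate in a way that would make the proof fail. You claim the blow-up equivalence is ``detected exactly by the leaf-count modulo $n-1$''. This cannot be the invariant: by the congruence you yourself derive, \emph{every} $H$-set arising from a homomorphism has leaf count $\equiv^* 1 \bmod (n-1)$, so leaf count is constant across all nontrivial classes and distinguishes nothing. The complete invariant is the \emph{tuple} $(n_1,\dots,n_t)$ of orbit multiplicities taken coordinatewise up to $\equiv^*$, i.e. each $n_i$ modulo $n-1$ together with whether $n_i=0$. Establishing this requires the structural fact your sketch elides: in $V_n(Id)$ self-similarity has trivial tails, so if $h$ stabilizes a leaf $u$ then $\phi(h)$ is the identity on all of $B_u$; consequently blowing up an orbit of type $S_i$ produces exactly $n$ copies of $S_i$, not merely ``a disjoint union of transitive $H$-sets of total size $n\lvert S_i\rvert$'' as you write. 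It is this type-preservation that forces each $n_i$ to change only by multiples of $n-1$ and never pass through $0$, which is what makes the starred residue tuple well defined and yields the bijection with solutions of the congruence; your weaker ``total size'' formulation controls only the total and cannot separate, say, $(n_1,n_2)=(2,1)$ from $(1,4)$ for cyclic $H$ of order $3$ in $V_5(Id)$. Relatedly, your remark that the star ``only has content when $n=2$'' is true for the total but misses that the star is essential coordinatewise: in the paper's own application (Lemma \ref{lem:orderpelements}) the solutions $n_2\equiv^* 0$ and $n_2\equiv^* n-1$ are counted as distinct, which is precisely the statement that an orbit type present in the leaf set can never be blown down to multiplicity zero. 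Finally, for the completeness direction (conjugate if and only if blow-up equivalent) the cleaner route is not your proposed uniqueness of a minimal invariant antichain, but the observation that any two invariant antichains for the same homomorphism admit a common \emph{invariant} expansion, together with the fact that any $H$-set isomorphism between invariant leaf sets extends by identity tails to an element of $V_n(Id)$ conjugating the two homomorphisms.
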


We extend a result \cite[Lemma 6.2]{HI} from $V_n(Id)$ to $V_n(P)$ as follows:

\begin{lemma}\label{lem:orderpelements}
If $p$ is a prime which divides neither $n-1$ nor $\ord(P)$, then the number of conjugacy classes of elements of order $p$ in $V_n(P)$ is $n$.
\end{lemma}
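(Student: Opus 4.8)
The plan is to count conjugacy classes of order-$p$ elements in $V_n(P)$ by reducing the problem to the already-known count in $V_n(Id)$ via Proposition \ref{redclos}, and then to invoke Lemma \ref{congruence}. Since $p$ is a prime dividing neither $n-1$ nor $\ord(P)$, any element $v \in V_n(P)$ of order $p$ satisfies the hypotheses of Proposition \ref{redclos}: its order is prime and does not divide $\ord(P)$. Hence the reduced closure of its $n$-strand diagram has no $\sigma$-vertices. By Theorem \ref{MT}, two such elements are conjugate in $V_n(P)$ if and only if their reduced closed $n$-strand diagrams agree up to conjugating transformations; but since neither diagram has any $\sigma$-vertices, conjugating transformations act trivially, and so the conjugacy class is determined entirely by the underlying unlabelled reduced closed strand diagram. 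This is exactly the data classifying conjugacy classes in $V_n(Id) = V_n$, so the order-$p$ conjugacy classes of $V_n(P)$ are in bijection with those of $V_n(Id)$.

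Having made this reduction, I would then apply Lemma \ref{congruence} with $H = \mathbb{Z}_p$, the cyclic group of prime order $p$. First I would enumerate the transitive $\mathbb{Z}_p$-spaces: since $p$ is prime, a transitive $\mathbb{Z}_p$-space is either a single fixed point (the trivial one-point space, of size $1$) or the regular space $\mathbb{Z}_p$ acting on itself (of size $p$). Thus $t = 2$, with $|S_1| = 1$ and $|S_2| = p$. The congruence of Lemma \ref{congruence} becomes
\begin{equation*}
n_1 + n_2\, p \equiv^* 1 \pmod{n-1},
\end{equation*}
with $n_1, n_2 \geq 0$ not both giving the zero solution, and counting solutions up to the equivalence captured by $\equiv^*$. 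The number of order-$p$ conjugacy classes equals the number of such solutions that actually correspond to an element of order exactly $p$ (as opposed to the identity, which arises from $n_2 = 0$).

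The main computation is therefore to show that this count equals $n$. The key point is that since $p \nmid (n-1)$, multiplication by $p$ is invertible modulo $n-1$, so as $n_2$ ranges over residues the quantity $n_2\, p \bmod (n-1)$ ranges over all residues; consequently, for each target value of $n_1 + n_2 p \bmod (n-1)$ one can solve for the pair, and the essential freedom is in choosing the number $n_2$ of nontrivial orbits. One counts the solutions satisfying the $\equiv^*$ condition, excluding the identity (the all-trivial-orbit solution $n_2 = 0$ giving the trivial homomorphism), and verifies the total is $n$. I expect the main obstacle to be the careful bookkeeping of the $\equiv^*$ condition — in particular handling the distinction $a = 0 \iff b = 0$, which separates the genuine order-$p$ elements from the identity and prevents overcounting or undercounting the boundary case; this is precisely the subtlety that forces the answer to be $n$ rather than a naive residue count. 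Concretely, I would mirror Higman's computation in \cite[Lemma 6.2]{HI}, checking that the presence of the extra orbit size $p$ (now genuinely an $H$-space size rather than an artifact of labels) reproduces his count of $n$ classes, the labels of $P$ having been rendered irrelevant by Proposition \ref{redclos}.
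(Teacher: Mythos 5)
Your proposal is correct and follows essentially the same route as the paper: reduce to the $\sigma$-vertex-free case via Proposition \ref{redclos} (so that order-$p$ classes in $V_n(P)$ biject with those in $V_n(Id)$), then apply Lemma \ref{congruence} with $H=\mathbb{Z}_p$, whose only transitive spaces have sizes $1$ and $p$, and count solutions of $n_1 + pn_2 \equiv^* 1 \pmod{n-1}$ using the invertibility of $p$ modulo $n-1$ and excluding the trivial homomorphism. Your justification of the reduction step (via Theorem \ref{MT} and the triviality of conjugating transformations on diagrams without $\sigma$-vertices) is in fact spelled out more carefully than in the paper, while the final residue count you leave as a sketch is exactly the short computation the paper performs.
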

\begin{proof}
The key idea of the proof is that the congruence of Lemma \ref{congruence} holds for $V_n(P)$ in this case. By Lemma \ref{redclos}, the tree-pair representative of the elements of $V_n(P)$ which represent the generators of these conjugacy classes of homeomorphisms are always conjugate to an element of $V_n(Id)$. So the number of conjugacy classes of homeomorphisms of a cyclic group of order $p$ in $V_n(P)$ is equal to the number of solutions of $$n_1 + pn_2 \equiv^* 1 \mod n-1.$$
For every $n_1 \in \{0,2,3,\dots, n-1\}$ there is only one possible solution for $n_2$, since $p$ does not divide $n-1$. For $n_1 \equiv^* 1$, where we have $n_2 \equiv^* 0$ and $n_2 \equiv^* n-1$. Apart from the trivial homomorphism, there are $n$ different conjugacy classes of elements of order $p$ in $V_n(P)$. \end{proof}

\begin{proof}[Proof of Theorem \ref{thm:nonisomorphism}]
We take $p$ to be sufficiently large so it does not divide any of $\ord(P)$, $\ord(Q)$, $n-1$ and $m-1$. By Lemma \ref{lem:orderpelements}, the number of conjugacy classes of elements of order $p$ in $V_n(P)$ and $V_m(Q)$ are $n$ and $m$ respectively, so these two groups cannot be isomorphic.
\end{proof}

\section*{Acknowledgements}
The author would like to thank his advisor Javier Aramayona for conversations and support. He is also grateful to Motoko Kato and Diego López for comments; and Waltraud Lederle for helping to strengthen Theorem \ref{thm:nonisomorphism}. Finally, he also wants to acknowledge financial support from the Spanish Ministry of Economy and Competitiveness, through the “Severo Ochoa Programme for Centres of Excellence in R\&D” (SEV-2015-0554) and the grant MTM2015-67781.

\label{Bibliography}

\bibliographystyle{abbrv} 
\bibliography{Bibliography}

\end{document}